\numberwithin{equation}{section}
\newtheorem{Theorem}{Theorem}[section]
\newtheorem{Lemma}[Theorem]{Lemma}
\newtheorem{Proposition}[Theorem]{Proposition}
\theoremstyle{definition}
\newtheorem*{Example}{Example}
\theoremstyle{remark}
\newtheorem*{remark}{Remark}
\newcommand{\leg}[2]{\left(\frac{#1}{#2}\right)}
\newcommand{\Tp}[1]{T\left(p^{#1}\right)}
\newcommand{\Tps}[2]{T_{#1}\left(p^{#2}\right)}
\title[$p$-adic Properties of Certain Half-Integral Weight Modular Forms]{$p$-adic Properties of Coefficients of Certain Half-Integral Weight Modular Forms}
\author{Lea Beneish}
\address{Department of Mathematics, Emory University, 400 Dowman Drive, W401, Atlanta, GA 30322}
\email{lea.beneish@emory.edu}
\author{Claire Frechette}
\address{Department of Mathematics, Brown University, Box 1917,
151 Thayer Street, Providence, RI 02912}
\email{\text{claire$\_$frechette@brown.edu}}
\begin{document}

\maketitle

\begin{abstract}
In this paper, we study the parallel cases of Zagier's and Folsom-Ono's grids of weakly holomorphic (resp. weakly holomorphic and mock modular) forms of weights 3/2 and 1/2, investigating their $p$-adic properties under the action of Hecke operators.
\end{abstract}

\section{Introduction and Statement of Results}


In \cite{ZAG}, Zagier builds two sets of interlocking weakly holomorphic modular forms of level 4 in Kohnen's plus space -- one set of weight 3/2 and one of weight 1/2 -- such that their coefficients not only form a grid, but also give the traces of singular moduli, the values of the $j$-function at CM-points. Zagier also uses these forms to reconstruct a theorem of Borcherds \cite{BO}, which enables the computation of minimal polynomials of these singular moduli. 

Inspired by Zagier's work, Duke and Jenkins explore in \cite{DJ} other half-integral weight weakly holomorphic modular forms in Kohnen's plus space of level 4. In \cite{BGK}, Bringmann, Guerzhoy, and Kane continue the study of these forms by investigating their $p$-adic properties, using a lifting procedure developed by Duke and Jenkins to link back to Zagier's original forms in order to give a $p$-adic relation between half-integral weight weakly holomorphic modular forms and classical half-integral weight holomorphic modular forms. However, the approach used by Bringmann, Guerzhoy, and Kane only works for half-integral weight forms of weight $k+\frac{1}{2}$ where $k \geq 2$. This raises the natural question of whether a similar result holds for forms lower weights, in particular, the original forms developed by Zagier.

Yet, Zagier's pair of sets of forms are not alone in their interconnectedness. In \cite{FO}, Folsom and Ono construct a startlingly similar grid: a set of weakly holomorphic modular forms of weight 3/2 which, when lined up term-by-term, form a set of mock modular forms of weight 1/2 in \cite{FO}, the first of which is essentially Ramanujan's third-order mock theta-function $$f(q) = 1 + \sum \limits_{n=1}^\infty \frac{q^{n^2}}{(1+q)^2(1+q^2)^2\cdots (1+q^n)^2}.$$ Guerzhoy follows up in \cite{GUER} by proving that all the coefficients of these forms are rational numbers with bounded denominator. Zwegers then strengthens this result in \cite{ZWEG} to prove that these coefficients are all integers.

Our goal in this paper is to obtain similar $p$-adic statements to those in \cite{BGK} for the parallel cases of Zagier's forms and Folsom-Ono's forms.
\begin{Theorem}\label{Hecke}
Let $v_p(\cdot)$ be the $p$-adic valuation, normalized such that $v_p(p) = 1$, and for $g$ a Fourier series with principal part $\sum_{\alpha} c_\alpha q^\alpha$ for a finite set of $\alpha$, define $w(g):= \max\{\lfloor \frac{v_p(\alpha)}{2}\rfloor\}$. Then, assuming the definitions in \S 2.1, 2.2, and 2.3, the following are true.
 \begin{enumerate} 
\item Let $p$ be an odd prime. Suppose g is a weight $\frac{3}{2}$ weakly holomorphic modular form in $M^{!,+}_{\frac{3}{2}}(4)$ with integer coefficients. Then for $n \geq w(g)$, we have $$g | \Tp{2n+4} - g|\Tp{2n} \equiv 0 \pmod{p^{n-w(g)}}.$$
\item Let $p\geq 5$ be a prime. Let G be a weight $\frac{3}{2}$ weakly holomorphic modular form in $M^{!,*}_{\frac{3}{2}}(144, \chi_{12})$ with integer coefficients. Then for $n \geq w(G)$, we have $$G | \Tps{12}{2n+4} - G|\Tps{12}{2n} \equiv 0 \pmod{p^{n-w(G)}}.$$
\end{enumerate}
\end{Theorem}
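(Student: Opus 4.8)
The plan is to reduce each part to a statement about a single basis element and then to analyse the iterated Hecke action through its effect on principal parts. Consider part (1). By \cite{ZAG}, \cite{DJ} the space $M^{!,+}_{\frac32}(4)$ has a $\mathbb Z$-basis $\{g_D\}$ indexed by the integers $D>0$ with $D\equiv 0,1\pmod 4$ (so that $q^{-D}$ lies in the plus space), normalised so that $g_D$ has principal part exactly $q^{-D}$; since the holomorphic plus space $M^{+}_{\frac32}(4)$ is trivial, a weakly holomorphic form in the plus space is determined by its principal part. Writing the principal part of $g$ as $\sum_{\alpha}c_\alpha q^{\alpha}$ with $c_\alpha\in\mathbb Z$ we thus have $g=\sum_\alpha c_\alpha g_{-\alpha}$, and because $\Tp{2n}$ is linear and $w(g_{-\alpha})=\lfloor v_p(-\alpha)/2\rfloor\le w(g)$ for every $\alpha$ occurring, it suffices to prove, for each basis vector and each $n\ge w(g_D)$,
\[
 g_D\mid\Tp{2n+4}\;-\;g_D\mid\Tp{2n}\;\equiv\;0\pmod{p^{\,n-w(g_D)}};
\]
recombining these sharper congruences against the integers $c_\alpha$ yields the asserted one modulo $p^{\,n-w(g)}$.

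\textbf{The Hecke action on the grid.} Applying Shimura's explicit formula for the weight $\tfrac32$ operator $\Tp2$ to the principal part of $g_D$, and using the uniqueness above, one obtains the shift identity
\[
 g_D\mid\Tp{2}\;=\;p\,g_{p^{2}D}\;+\;\leg{D}{p}\,g_{D}\;+\;g_{D/p^{2}},
\]
where $g_{D/p^{2}}:=0$ unless $p^{2}\mid D$. Two features of this identity drive the argument. First, the operator $U:=U(p^2)$ acts by $g_D\mid U=g_{D/p^{2}}$, so $U^{m}$ kills $g_D$ as soon as $m>\lfloor v_p(D)/2\rfloor=w(g_D)$; more generally $U^{w(g)+1}$ annihilates $g$, which is the source of the quantity $w$. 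Secondly, $\Tp{2n}$ is a polynomial in $\Tp2$: because the plus-space Hecke algebra at $p$ in weight $\tfrac32$ matches that in weight $2$, the operators satisfy the Chebyshev-type recursion $\Tp{2(n+1)}=\Tp2\cdot\Tp{2n}-p\,\Tp{2(n-1)}$, so that $\Tp{2n}=P_n(\Tp2)$ with
\[
 P_n(Y)=\sum_{k\ge 0}(-1)^{k}\binom{n-k}{k}p^{k}Y^{\,n-2k}.
\]

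\textbf{The telescoping.} Write $\Tp2=pV+\chi+U$ on the grid, where $Vg_D=g_{p^2D}$ and $\chi g_D=\leg Dp g_D$, and substitute this into $P_n$. Expanding, $g_D\mid\Tp{2n}$ becomes a sum over lattice-walk-type words in $V,\chi,U$ in which every $V$ contributes a factor $p$ and every $U$ applied below level $-w(g_D)$ annihilates the term. The crux is to show that, for $n\ge w(g_D)$, replacing $n$ by $n+2$ leaves unchanged exactly the part of $g_D\mid\Tp{2n}$ supported within the principal range $\{g_D,g_{D/p^2},\dots,g_{D/p^{2w(g_D)}}\}$ while multiplying every other contribution by at least $p^{\,n-w(g_D)}$; subtracting then gives the per-basis-element congruence. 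I expect this valuation count to be the main obstacle: at weight $\tfrac32$ the polynomials $P_n$ carry the smallest powers of $p$ that can occur and there is no slack to spare — this is exactly why the approach of \cite{BGK} requires $k\ge 2$ — so the cancellations have to be extracted from the detailed arithmetic of the $P_n$, and in particular the coefficient of $Y^0$ in $P_n$ vanishes for odd $n$ and equals $\pm p^{n/2}$ for even $n$; it is this parity-$2$ phenomenon that dictates the increment $2n+4$ in place of $2n+2$ and the modulus $p^{\,n-w(g)}$ rather than anything stronger.

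\textbf{The Folsom--Ono grid.} Part (2) follows the same outline with $M^{!,+}_{\frac32}(4)$ replaced by $M^{!,*}_{\frac32}(144,\chi_{12})$. The forms of \cite{FO} provide an integral basis of this space — integrality of all Fourier coefficients is the theorem of \cite{GUER} sharpened by \cite{ZWEG} — dual term-by-term to the associated weight $\tfrac12$ mock modular forms, and for $p\ge 5$ the operator $\Tps{12}{2}$ satisfies the analogues of both the shift identity and the Chebyshev recursion, so the telescoping argument carries over verbatim. The hypothesis $p\ge 5$ is precisely what is needed: since $3\mid 144$ and $3\mid\operatorname{cond}(\chi_{12})=12$, the operator $\Tps{12}{9}$ degenerates to a $U$-type operator rather than a genuine shift on the grid, while $p=2$ is excluded already because $2\mid 144$.
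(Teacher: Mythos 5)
Your setup — expanding $g$ over Zagier's integral basis $\{g_D\}$, the shift identity $g_D\mid\Tp{2}=p\,g_{p^2D}+\leg{D}{p}g_D+g_{D/p^2}$, and the recursion $\Tp{2n+2}=\Tp{2}\Tp{2n}-p\,\Tp{2n-2}$ — is exactly the paper's route, but you stop precisely where the proof has to happen. The paper's argument rests on the closed formula (its Proposition \ref{HeckegD}): for $D=p^{2v}j$ with $p^2\nmid j$ and $n\ge v$,
\begin{align*}
g_D\mid\Tp{2n}=\sum_{t=0}^{n-v}\leg{j}{p}^{\,n-v-t}p^t\,g_{p^{2t}j}+\sum_{t=1}^{v}p^{\,n-v+t}\,g_{p^{2n-2v+4t}j},
\end{align*}
proved by induction from the recursion and the principal-part characterization; the congruence then follows because $\leg{j}{p}^{\,n-v-t}$ has period two in $n$, so in $g_D\mid\Tp{2n+4}-g_D\mid\Tp{2n}$ \emph{all} terms with $t\le n-v$ cancel exactly and the survivors carry $p^{\,n-v+1}$ (when $p\nmid j$), respectively $p^{\,n-v}$ when $p\parallel j$ — which is why the paper telescopes through $\leg{j_D}{p}g_D\mid\Tp{2n+2}$ and treats $p\mid j_D$ separately. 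Your proposed ``crux'' statement is not a correct reformulation of this: it is false that only the principal-range part $\{g_D,\dots,g_{D/p^{2w(g_D)}}\}$ is left unchanged while ``every other contribution'' is divisible by $p^{\,n-w(g_D)}$ — for instance the term $p^{\,v+1}g_{p^{2v+2}j}$ lies outside that range and is not divisible by $p^{\,n-v}$ for large $n$; the theorem comes from cancellation of the whole low-$t$ range, not from a termwise valuation bound. Since you explicitly defer this count (``the main obstacle'') to ``the detailed arithmetic of the $P_n$,'' the central step of the proof is missing, and the target you set for it would not close the gap even if verified.

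For part (2), ``carries over verbatim'' conceals a second genuine issue. Unlike the level-$4$ plus space, $S_{\frac{3}{2}}(\Gamma_0(144),\chi_{12})$ is nonzero (two-dimensional), so a form in the Folsom--Ono span is \emph{not} automatically determined by its principal part after applying $\Tps{12}{2n}$, and the shift identity for the $G_D$ does not follow from uniqueness of principal parts as it does for the $g_D$. The paper's Lemma \ref{BasisGD} supplies exactly this: it uses that the $G_D$ are Poincar\'e series orthogonal to cusp forms under the regularized Petersson pairing, together with the support condition modulo $24$ and $p^2\equiv 1\pmod{24}$ for $p\ge 5$, to rule out a cusp-form discrepancy. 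Your sketch should invoke (or reprove) such a lemma before the grid computation can be transported to the $G_D$.
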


\begin{remark}
\end{remark}

\begin{enumerate}
 \item Theorem $1.1$ part (1) was proven independently in Ahlgren's ``Hecke relations for traces of singular moduli," by similar methods [\cite{Ahlgren}, Theorem $2$]. We were not aware of this result at the time we submitted our paper.
\item In \cite{ScottKim}, Ahlgren and Kim find similar relations to those in our Theorem $1.1$ for other grids, namely, grids that involve the function $\text{spt}(n)$, which counts the number of smallest parts in all partitions of $n$ and other smallest parts functions. They also find a Hecke relation for the mock theta function $f(q)$.
\end{enumerate}

\begin{Example} Let $p=3$ and take Zagier's form $g_4$ (see \S 2.2 for the construction) with principal part $q^{-4}$, $v_3(4)=0$, and let $n=1$.\\

We have $g_4|T_{p^2}$ and $g_4|T_{p^6}$ (computed modulo $3^9$, for convenience) as follows:\\
\begin{align*}
g_4|T_{p^2}&\equiv3q^{-36} + q^{-4} + 19675 + 19193q^3 + 6555q^4 + 13110q^7 + 
9665q^8 + 4197q^{11}\\
    & + 7517q^{12} + 8724q^{15} + 19665q^{16} + 13110q^{19} + 
O(q^{20}) \pmod{3^9}\\\\
g_4|T_{p^6}&\equiv27q^{-2916} + 9q^{-324} + 3q^{-36} + q^{-4} + 19603 + 19679q^3 
+ 19677q^4 +
     19671q^7 \\
     &+ 9665q^8 + 4197q^{11} + 19667q^{12} + 19659q^{15} + 
19665q^{16} +
     19671q^{19} + O(q^{20})\pmod{3^9}
\end{align*}
 \\
The  minimum $3$-adic valuation of the coefficients of $g_4|T_{p^6}- g_4|T_{p^2}$ (computed up 
to $O(q^{274}))$ is $2$, so $g_4|T_{p^6}- g_4|T_{p^2}\equiv 0 \pmod {3^2}$.
  
\end{Example}

In \cite{BGK}, Bringmann, Guerzhoy, and Kane explore the close $p$-adic relationship between the operators $U$ and $T$, exploiting the properties of the $T$-operator to yield a conclusion about the $U$-operator. To this end, we derive the following relation from the action of Hecke operators on our parallel sets of pairs of functions.

\begin{Theorem} \label{Today}  Assuming the same notation as in Theorem \ref{Hecke}, for $g_D$ (resp. $G_D$) the weight $\frac{3}{2}$ functions in 
Zagier's (resp. Folsom-Ono's) grid, the following are true. Let $j\in \mathbb{Z}_{\geq 0}$ with $p^2 \nmid j$. Then for$v,s \in \mathbb{Z}_{\geq 0}$.
\begin{enumerate}
\item Denote $g_D= \sum_d b(D,d)q^d$, then for $i\in \mathbb{Z}$ such that $\leg{-i}{p} = \leg{j}{p}$, we have 
\begin{align*}
b\left(p^{2v}j,p^{2v+2s}i\right) \equiv 0 \pmod{p^s}.
\end{align*}
\item Denote $G_D= \sum_d B(D,d)q^d$, then for $i \in \mathbb{Z}$ such that $\leg{-i}{p} = \leg{j}{p}$, we have 
\begin{align*}
B\left(p^{2v}j,p^{2v+2s}i\right) \equiv 0 \pmod{p^s}.
\end{align*}
\end{enumerate}
\end{Theorem}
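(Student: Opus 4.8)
The plan is to deduce both parts from the action of $\Tp{2}$ on the grids, bootstrapping to higher Hecke operators via the half-integral-weight Hecke polynomials; the two parts are formally identical once one inserts the character $\chi_{12}$ and works at level $144$, so I describe part (1).

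First I would record how Zagier's forms transform under $\Tp{2}$. Since a weakly holomorphic form in $M^{!,+}_{3/2}(4)$ is determined by its principal part, comparing principal parts gives
\[
g_D \mid \Tp{2} \;=\; p\, g_{p^2 D} \;+\; \leg{D}{p}\, g_D \;+\; g_{D/p^2},
\]
with the convention $g_{D/p^2}:=0$ when $p^2\nmid D$. Reading off the coefficient of $q^d$ yields the recursion
\[
b(D,p^2 d)\;=\;p\,b(p^2 D,d)\;+\;\Bigl[\leg{D}{p}-\leg{-d}{p}\Bigr]b(D,d)\;+\;b(D/p^2,d)\;-\;p\,b(D,d/p^2).
\]
Now specialize $D=p^{2v}j$ with $p^2\nmid j$ and $d$ of the form $p^{2m}i$. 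Here $\leg{p^{2v}j}{p}=\leg{j}{p}$, while $\leg{-p^{2m}i}{p}$ equals $\leg{-i}{p}$ when $m=0$ and $0$ when $m\ge 1$; so the hypothesis $\leg{-i}{p}=\leg{j}{p}$ makes the bracketed term vanish in every case, leaving a clean three-term recursion relating $b(p^{2v}j,p^{2(v+s)}i)$ to $b(p^{2v\pm 2}j,\cdot)$ and $b(p^{2v}j,\cdot)$ at smaller second index (the $v=0$ case has the $b(D/p^2,d)$ term absent and a harmless $\chi_j$-multiple of a lower term in its place).

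Next I would iterate the displayed transformation formula in $v$, which expresses the whole tower $\{g_{p^{2v}j}\}_{v\ge 0}$ through the Chebyshev-type polynomials $\widetilde Q_n$ with $\widetilde Q_{n+1}(X)=X\widetilde Q_n(X)-p\,\widetilde Q_{n-1}(X)$, $\widetilde Q_0=1$, $\widetilde Q_1(X)=X$ --- the very polynomials for which $\Tp{2n}=\widetilde Q_n(\Tp{2})$, so the engine here is the one already driving Theorem~\ref{Hecke}. Grouping Fourier indices by their prime-to-$p$ part, $\Tp{2}$ acts on each block by a tridiagonal operator whose index-raising off-diagonal carries a factor of $p$, and $\widetilde Q_n$ of that operator acquires a further $p^{\lfloor(n-k)/2\rfloor}$ on the $\Tp{2}^{\,k}$-term. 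I would then argue by induction on $v$. The base case $v=0$, namely $b(j,p^{2s}i)\equiv 0\pmod{p^s}$, is where integrality of the grid forms enters: $g_{p^{2w}j}=p^{-w}\widetilde Q_w(\Tp{2})g_j$ has integral coefficients (Zagier for part (1); for part (2) the Folsom--Ono analogue, by Zwegers \cite{ZWEG}), and unwinding this together with the Chebyshev coefficients and the forced cancellations above pins down the divisibility of the boundary coefficients $b(j,p^{2w}i)$. In the inductive step $v\ge 1$, the term $b(p^{2v-2}j,\cdot)$ carries exactly $p^s$ by the outer induction, while the two $p$-times terms supply the remaining power jointly, since their difference drops to a lower level of the tower.

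The main obstacle is obtaining the sharp exponent $p^s$ rather than roughly $p^{s/2}$: a naive single induction on $s$ using only the collapsed three-term recursion loses one power of $p$ per step, because $p\,b(p^2 D,\cdot)$ and $p\,b(D,\cdot/p^2)$ are each a power of $p$ short on their own. Closing this gap forces one either to route the iteration through the polynomials $\widetilde Q_n$ (whose binomial coefficients restore the missing powers) or to carry along the refined statement that the differences $b(p^{2(v+1)}j,\cdot)-b(p^{2v}j,\cdot)$ gain an extra power of $p$; in either case the integrality of Zagier's and Folsom--Ono's forms must genuinely be invoked, and this is precisely the feature absent from the higher-weight setting of \cite{BGK} that makes these low-weight cases tractable. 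A minor separate point is to dispose first of the degenerate indices ($i\le 0$, $i=0$, or $p\mid i$ or $p\mid j$), which are vacuous, force $s=0$, or reduce immediately to the generic case.
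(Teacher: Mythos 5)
Your starting point is fine: the formula $g_D \mid \Tp{2} = p\,g_{p^2D} + \leg{D}{p} g_D + g_{D/p^2}$ and the resulting coefficient recursion are correct, and in principle they carry the same information as the ingredients the paper uses (Zagier's duality plus the explicit Hecke action in both weights). But there is a concrete error at the first substantive step: the bracket $\leg{D}{p}-\leg{-d}{p}$ does not vanish ``in every case'' under the hypothesis $\leg{-i}{p}=\leg{j}{p}$. Writing $D=p^{2v}j$ and $d=p^{2m}i$ with $p^2\nmid j$, $p^2\nmid i$, one has $\leg{D}{p}=0$ as soon as $v\geq 1$ and $\leg{-d}{p}=0$ as soon as $m\geq 1$; so the bracket equals $\leg{j}{p}\neq 0$ when $v=0,\ m\geq 1$, and $-\leg{-i}{p}\neq 0$ when $v\geq 1,\ m=0$ (for $p\nmid i,j$). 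These boundary terms are exactly where the hypothesis must be used, and they cancel only when different instances of the recursion are played against each other (already for $b(j,p^{4}i)$ one needs the $(v,m)=(0,1)$ relation combined with the $(1,0)$ relation to see the factor $p^2$); the ``clean three-term recursion'' you rely on does not exist, so the induction built on it does not start as stated.

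More importantly, the heart of the proof is missing. You yourself flag ``the main obstacle'' --- obtaining the sharp exponent $p^s$ rather than roughly $p^{s/2}$ --- and then only gesture at two possible remedies (routing through the polynomials $\widetilde{Q}_n$, or carrying a refined difference statement) without executing either; the appeals to tridiagonal operators and ``forced cancellations'' are not an argument. The paper closes exactly this gap by a different device: it first proves the operator congruence $g_D\mid\Tp{2n+2}-\leg{j}{p}\,g_D\mid\Tp{2n}\equiv 0\pmod{p^{n-v+1}}$ (Proposition \ref{gD}), then uses the weight-$\frac{1}{2}$ expansion $a_{p^n}(D,i)=\sum_{t=0}^{n}\leg{-i}{p}^{n-t}a(D,p^{2t}i)$ (Lemma \ref{Thing2} with $u=0$) together with the duality $a_m(D,d)=-b_m(D,d)$ (Theorem \ref{AB}) to convert that congruence into a sum which telescopes when $\leg{-i}{p}=\leg{j}{p}$, isolating the single coefficient $b(p^{2v}j,p^{2n}i)\pmod{p^{n-v}}$; setting $n=v+s$ gives the theorem. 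Your plan deliberately avoids the duality and the weight-$\frac{1}{2}$ grid, but then you must reconstruct by hand precisely the cancellation that the dual expansion supplies for free, and that is the step your proposal leaves undone. (A minor point: integrality of the coefficients is needed only so that the congruences make sense; the paper's argument does not invoke it in the stronger structural way you describe.)
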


This paper is organized as follows: in \S 2.1 we define notation and recall preliminaries. Then, in \S 2.2 and \S 2.3, we describe Zagier's and Folsom-Ono's half-integral weight weakly holomorphic modular forms in terms of their expansions, their actions under Hecke operators, and their duality properties. Finally, in \S 3, we prove Theorems $1.1$ and $1.2$. Throughout this paper, $p$ is taken to be a prime unless otherwise stated.

\section*{Acknowledgments}
The authors would like to thank Ken Ono for suggesting the topic and for advice and guidance throughout the process. We also would like to thank Michael Griffin, Michael Mertens, and Sarah Trebat-Leder for useful conversations. Both authors are also grateful to NSF for its support.

\section{Nuts and Bolts}

In this section, we define Zagier's weight $1/2$ and $3/2$ forms, describe the way the Hecke operators act on them, and state their duality properties. We also give the analogous description for those forms that make up the Folsom-Ono grid. First, we define some notation concerning Hecke operators and the spaces of these forms.
\subsection{Notation and Preliminaries}\emph{}

The congruence subgroup $\Gamma_0(N)$ is defined as
\[\Gamma_0(N) := \left\{\begin{pmatrix} a&b\\c&d\end{pmatrix} \in \mathrm{SL}_{2}(\mathbb{Z}) : c\equiv 0 \pmod{N}\right\}.\]

We call $f$ a \emph{weakly holomorphic modular form} of weight $k+\frac{1}{2}$ and level $4N$ and Nebentypus $\chi$ if it is a holomorphic function on the upper half-plane $\mathfrak{H}$ that satisfies
\[f\left(\frac{a\tau+b}{c\tau+d}\right)=\chi(d)\leg{c}{d}^{2k+1}\epsilon_d^{-2k-1}(c\tau+d)^{k+\frac{1}{2}}f(\tau) \quad \]
 \text{for all $\begin{pmatrix}
a & b  \\
c & d \end{pmatrix} \in \Gamma_0(4N)$}, where, $\epsilon_d$ is either $1$ or $i$ depending on whether $d\equiv 1\pmod 4$ or $d\equiv 3\pmod 4$,
and its poles, if any, are supported at the cusps (see \cite{Ken}).

 A \emph{cusp form} is a modular form that vanishes at all cusps. Similarly, a \emph{weakly holomorphic cusp form} is a weakly holomorphic modular form which has zero constant term at all cusps \cite{Ken}. 
 
We denote the space of holomorphic (resp. weakly holomorphic) modular forms of weight $k+\frac{1}{2}$ and level $N$, with Nebentypus $\chi$ by $M_{k+\frac{1}{2}}(N,\chi)$ (resp. $M_{k+\frac{1}{2}}^!(N,\chi)$). We write $M_{k+\frac{1}{2}}^{!,+}(N, \chi)$ to emphasize when the forms in $M_{k+\frac{1}{2}}^!(N,\chi)$ are in Kohnen's plus space. Similarly, we write $S_{k+\frac{1}{2}}(N,\chi)$ (resp. $S_{k+\frac{1}{2}}^!(N,\chi)$) for the space of holomorphic (resp. weakly holomorphic) cusp forms of weight $k+\frac{1}{2}$ and level $N$ with Nebentypus $\chi$ (and for weakly holomorphic cusp forms in the plus space, $S_{k+\frac{1}{2}}^{!,+}(N, \chi)$). 

Let $M^{!,*}_{k+\frac{1}{2}}(N,\chi)$ signify the space spanned by Folsom-Ono's forms of weight $k + \frac{1}{2}$. That is, we denote by $M^{!,*}_{k}(N,\chi)$ the space of weakly holomorphic modular forms of weight $k$, level $N$, with Nebentypus $\chi$ whose coefficients are supported on exponents that are congruent to $(-1)^{k+1} \pmod{24}$. 

Furthermore, following the notation in \cite{BRO}, we define a \emph{harmonic Maass form} on $\Gamma_0(4N)$ of weight $k+\frac{1}{2}$ as a smooth function $H:\mathfrak{H} \to \mathbb{C}$ such that
\begin{enumerate}
\item For all matrices in $\Gamma_0(4N)$,  \[H\left(\frac{a\tau+b}{c\tau+d}\right)=\leg{c}{d}^{2k+1}\epsilon_d^{-2k-1}(c\tau+d)^{k+\frac{1}{2}}H(\tau) \quad \]
\item $\Delta_{k+\frac{1}{2}}H = 0$ where $\Delta_k$ is the weight $k$ hyperbolic Laplacian, defined as $$\Delta_k := -y^2\left(\frac{\partial^2}{\partial x^2} + \frac{\partial^2}{\partial y^2}\right)  + iky\left(\frac{\partial}{\partial x} + i\frac{\partial}{\partial y} \right).$$ for $\tau=x+iy\in \mathfrak{H}$ with $x,y\in\mathbb{R}.$ 

\item There is a polynomial $P_H=\sum_{n\geq 0} c^{+}(n)q^n\in \mathbb{C}[q^{-1}]$ such that $$H(\tau)-P_H(\tau)=O(e^{-\epsilon y})$$ as $y\to +\infty$ for some $\epsilon>0$, and analogous conditions are required at all cusps.
\end{enumerate}
If $H$ is a harmonic Maass form, there is a canonical splitting of $H$ into  $$H = H^+ + H^-$$
where $H^+$ is the holomorphic part of $H$, called a \emph{mock-modular form}, and $H^-$ is the non-holomorphic part of $H$ \cite{unearthing}.

We also recall the classical cuspidal Poincar\'e series and the Maass-Poincar\'e series as in \cite{unearthing}: a general Poincar\'e series of weight $k$ for $\Gamma_0(N)$ is given by
\[\mathbb{P}(m,k,N,\varphi_m;\tau):=\sum\limits_{\gamma\in\Gamma_\infty\setminus \Gamma_0(N)}(\varphi_m^*|_k\gamma)(\tau),\]
where $m$ is an integer, $\Gamma_\infty:=\left\{\pm\left(\begin{smallmatrix} 1 & n \\ 0 & 1 \end{smallmatrix}\right)\: :\: n\in\mathbb{Z}\right\}$ is the subgroup of translations in $\Gamma_0(N)$, and $\varphi_m^*(\tau):=\varphi_m(y)e^{2\pi imx}$ for a function $\varphi_m:\mathbb{R}_{>0}\rightarrow\mathbb{C}$ which is $O(y^A)$ as $y\rightarrow 0$ for some $A\in\mathbb{R}$. We
distinguish two special cases ($m>0$),
\begin{align*}
P(m,k,N;\tau)&:=\mathbb{P}(m,k,N,e^{-my};\tau)\\
Q(-m,k,N;\tau)&:=\mathbb{P}(-m,2-k,N,\mathcal{M}_{1-\frac k2}(-4\pi my);\tau),
\end{align*}
where $\mathcal{M}_s(y)$ is defined in terms of the $M$-Whittaker function. We often refer to $Q(-m,k,N;\tau)$ as a \emph{Maass-Poincar\'e series}. It is well known that the Fourier expansions of the cuspidal Poincar\'e series
are given by infinite sums of Kloosterman sums weighted by
$J$-Bessel functions \cite{unearthing}.

We also define the Petersson inner product for forms $f(\tau)\in M_k(N)$ and $g(\tau) \in M_k^!(N)$ (note that it also exists for $f(\tau)\in M_k(N)$ and $g(\tau)$ a harmonic Maass form), denoted $\langle f, g \rangle$, as the constant term in the expansion at $s=0$ of the meromorphic continuation in $s$ of the function
$$\frac{1}{[SL_2(\mathbb{Z}): \Gamma_0(N)]}\lim\limits_{T\to\infty}\int_{\mathcal{F}_T(N)} f(\tau)\overline{g(\tau)}y^{k-s-2}dxdy$$ where $$\mathcal{F}_T(N):=\bigcup_{\gamma \in \Gamma_0(N)\backslash SL_2(\mathbb{Z})}\gamma\mathcal{F_T}(SL_2(\mathbb{Z})$$
and $$\mathcal{F_T}(SL_2(\mathbb{Z})=\{\tau \in \mathfrak{H}|\hspace{1mm} |x|\leq \frac{1}{2}, |\tau|\geq 1, \text{and }  y\leq T\}.$$
We define the differential operator by $D:=\frac{1}{2\pi i}\frac{d}{dz}$ and let $\mathcal{H}_k(N)$ denote the space of harmonic Maass forms of weight $k$ and level $N$.
\begin{Lemma} (Theorem $7.8$ in\cite{unearthing}) If $2\leq k\in \mathbb{Z}$ the image of the map $$D^{k-1}: \mathcal{H}_{2-k}(N)\to M_k^!(N)$$ consists of forms in $M_k^!(N)$ which are orthogonal to cusp forms with respect to the regularized inner product, which also have constant term zero at all cusps of $\Gamma_0(N)$.
\end{Lemma}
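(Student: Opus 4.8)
The plan is to prove the asserted equality of spaces by establishing both inclusions. Write $M_k^{!,\flat}(N)$ for the space of $f \in M_k^!(N)$ that are orthogonal to cusp forms under the regularized inner product and have vanishing constant term at every cusp; the goal is to show $D^{k-1}(\mathcal{H}_{2-k}(N)) = M_k^{!,\flat}(N)$. Throughout I use the canonical splitting $H = H^+ + H^-$ of a harmonic Maass form, with $H^+ = \sum_n c^+(n)q^n$ and $H^- = c^-(0)y^{k-1} + \sum_{n<0}c^-(n)\Gamma(k-1,4\pi|n|y)q^n$, together with the observation that growth condition (3) forces $c^-(0) = 0$: the term $c^-(0)y^{k-1}$ grows polynomially in $y$, so it can neither be absorbed into the holomorphic principal part $P_H$ nor decay exponentially.

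\emph{Forward inclusion, $D^{k-1}(\mathcal{H}_{2-k}(N)) \subseteq M_k^{!,\flat}(N)$.} I first check that $D^{k-1}H$ is weakly holomorphic of weight $k$. Bol's identity supplies the weight-$k$ transformation law, so only holomorphy is at issue. Rewriting $\Gamma(k-1,4\pi|n|y) = (k-2)!\,e^{-4\pi|n|y}\sum_{j=0}^{k-2}(4\pi|n|y)^j/j!$ shows that each term of $H^-$ equals the antiholomorphic exponential $e^{2\pi i n\bar\tau}$ times a polynomial in $y$, hence in $\tau$, of degree at most $k-2$; since $c^-(0)=0$ removes the only potential $\tau$-degree $k-1$ contribution, applying $D^{k-1} = (2\pi i)^{-(k-1)}\partial_\tau^{k-1}$ annihilates $H^-$. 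Thus $D^{k-1}H = D^{k-1}H^+ = \sum_n n^{k-1}c^+(n)q^n$, which is holomorphic with only finitely many negative-index terms, so $D^{k-1}H \in M_k^!(N)$, and the $n=0$ term vanishes, giving zero constant term at $\infty$; the same computation in the expansions at the other cusps gives vanishing constant terms everywhere. For orthogonality to cusp forms I would compute $\langle g, D^{k-1}H\rangle$ for $g$ a cusp form by the regularized unfolding and Stokes'-theorem argument of \cite{unearthing}: because $D^{k-1}H$ has vanishing constant term at every cusp while $g$ is cuspidal, every boundary contribution in the truncated integral over $\mathcal{F}_T(N)$ vanishes as $T\to\infty$, forcing $\langle g, D^{k-1}H\rangle = 0$.

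\emph{Reverse inclusion, $M_k^{!,\flat}(N) \subseteq D^{k-1}(\mathcal{H}_{2-k}(N))$.} Given $f \in M_k^{!,\flat}(N)$ with principal parts at the various cusps, I use the weight $2-k$ Maass--Poincar\'e series $Q(-m,k,N;\tau) \in \mathcal{H}_{2-k}(N)$ from \S2.1, whose holomorphic part has principal part a nonzero multiple of $q^{-m}$ at the chosen cusp. By the forward inclusion, $D^{k-1}Q(-m,k,N)$ lies in $M_k^{!,\flat}(N)$ with principal part a nonzero multiple of $(-m)^{k-1}q^{-m}$, so a finite linear combination $\tilde H$ of these series, taken over all $m$ and all cusps, can be chosen so that $D^{k-1}\tilde H$ has exactly the principal parts of $f$. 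Then $f - D^{k-1}\tilde H \in M_k^!(N)$ is holomorphic with vanishing constant terms, hence a cusp form; but it is also orthogonal to every cusp form, being a difference of two elements of $M_k^{!,\flat}(N)$, so it is orthogonal to itself and therefore $0$. Hence $f = D^{k-1}\tilde H \in D^{k-1}(\mathcal{H}_{2-k}(N))$.

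The main obstacle is the orthogonality assertion in the forward inclusion: making the regularized Petersson pairing $\langle g, D^{k-1}H\rangle$ rigorous requires controlling the divergent unfolding integral against a weakly holomorphic form and tracking precisely the boundary terms produced by Stokes' theorem at each cusp, which is exactly where the vanishing of the constant terms is consumed and is the step I would take from \cite{unearthing}. A secondary point, needed for the reverse inclusion, is confirming that the Maass--Poincar\'e series realize arbitrary principal parts at every cusp with nonzero leading coefficients, so that the linear system defining $\tilde H$ is solvable.
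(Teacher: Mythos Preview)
The paper does not prove this lemma at all: it is stated as a quotation of Theorem~7.8 from \cite{unearthing} and is used as a black box. So there is no ``paper's own proof'' to compare your proposal against.

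Your sketch is a reasonable outline of the standard argument. A few remarks. Your claim that condition~(3) in the paper's definition of harmonic Maass form forces $c^-(0)=0$ is correct for this paper's convention (exponential decay of $H-P_H$), but be aware that some sources allow a weaker ``linear exponential growth'' condition under which the $y^{k-1}$ term survives; one must then handle it separately. Your computation that $D^{k-1}$ annihilates $H^-$ is fine for integer $k\ge 2$, using the finite-sum expansion of the incomplete gamma function. The orthogonality step is indeed where the real work lies: the usual route is not an unfolding but rather to write $D^{k-1}H = \text{const}\cdot R_{-k}\cdots R_{2-k}H$ as an iterated Maass raising, or equivalently to use the pairing identity relating $\langle g, D^{k-1}H\rangle$ to $\langle \xi_{2-k}H, \text{(something)}\rangle$ via Stokes' theorem on the truncated fundamental domain; the boundary terms vanish because $g$ is cuspidal and $D^{k-1}H$ has no constant term. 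For the reverse inclusion your strategy is correct, but note that the paper's Maass--Poincar\'e series $Q(-m,k,N;\tau)$ are attached only to the cusp $\infty$; to match principal parts at the other cusps you need the analogous series attached to each cusp, which exist but are not introduced in this paper.
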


Let $m$ be an integer and let $q:= e^{2\pi i \tau}.$ Following \cite{BGK}, we also recall, the standard $U(m)$ and $V(m)$ operators by their action on $q$-series $\sum a(n)q^n$ as follows,

{\begin{align*}
\left(\sum a(n)q^n\right) | U\left(m\right) &:= \sum a\left(mn\right)q^n,\\
\left(\sum a(n)q^n\right) | V\left(m\right) &:= \sum a(n)q^{mn}.
\end{align*}

Let $\chi(t,k),$ where $t$ and $k$ are integers, be the twisting operator, defined by
\begin{align*}
\left(\sum a(n)q^n\right)  \otimes \chi(t,k) &:= \sum \leg{(-1)^k\cdot n}{t}a(n)q^n, 
\end{align*} 
where $\leg{\cdot}{\cdot }$ is the standard Kronecker symbol.
Note that all three of these operators preserve modularity, although they may change the level (see section 3.2 of \cite{Ken}).

The weight $k+\frac{1}{2}$ Hecke operators are defined for $f$ of level $N$ and Nebentypus $\chi_s=\leg{s}{\cdot}$, 
where $p$ is a prime and $p \nmid N$, by 
\begin{align}
\label{Top} f|T_{s}\left(p^2\right) := f|U\left(p^2\right) + p^{k-1}\leg{s}{p}f\otimes \chi(p,k) + p^{2k-1}f|V\left(p^2\right),
\end{align} 
We then define $\Tps{s}{2m}$ for $m \geq 1$ recursively by
\begin{align}
\label{Toprec}\Tps{s}{2m} &:= \Tps{s}{2m-2}\Tps{s}{2} - p^{2k-1}\Tps{s}{2m-4}.
\end{align}
Furthermore, this can be extended to the formula
\begin{align}\label{multiH}
\Tps{s}{2m}\Tps{s}{2n} &= \sum_{t = 0}^m p^t \cdot \Tps{s}{2n+ 2m -4t} , \text{ for $m \leq n$}. 
\end{align}
However, note that when we use the weight $\frac{1}{2}$ Hecke operator $\Tp{2}$, we use the normalized version $p\cdot \Tp{2}$ as in \S 6 of \cite{ZAG}. Also, in the case where the Nebentypus is the trivial character, as it is in Section 2.2, we suppress the $s$ in the notation of the Hecke operator.

\subsection{Zagier's Half-Integral Weight Modular Forms}

We define Zagier's forms $f_d$ and $g_D$ of weight $\frac{1}{2}$ and $\frac{3}{2}$, respectively. Following Zagier's notation, we label the forms $f_d$ and $g_D$ each according to the single term in their principal part, where the subscripts $d$ and $D$ are such that $f_d = q^{-d} + \sum\limits_{D> 0} a(D,d)q^D$ and $g_D = q^{-D} + \sum\limits_{d \geq 0} b(D,d)q^d$. These forms are weakly holomorphic modular forms with integral coefficients on $\Gamma_0(4)$ in Kohnen's plus space which form a basis for $M_{\frac{1}{2}}^{!,+}(\Gamma_0(4))$ and $M_{\frac{3}{2}}^{!,+}(\Gamma_0(4))$, respectively (see \cite{ZAG}). First we define the following:
$$\theta(\tau):=\sum_{n=-{\infty}}^{\infty}q^{n^2}, \hspace{5mm} \theta_1(\tau):=\sum_{n=-\infty}^{\infty} (-1)^nq^{n^2}, \hspace{5mm} g(\tau):=\theta_1(\tau)\frac{E_4(4\tau)}{\eta(4\tau)^6}.$$

The basis of $M_{\frac{1}{2}}^{!,+}(\Gamma_0(4))$, the $f_d$'s, can be constructed as follows: $f_0:=\theta(\tau)$ and $f_3$ can be obtained from $[\theta(\tau), E_{10}(4\tau)]/\Delta(4\tau)$, which is a linear combination of $f_0$ and $f_3$. The rest of the $f_d$ can be computed by multiplying $f_{d-4}$ by $j(4\tau)$ and subtracting off multiples of the previously computed $f_j$ $(0\leq j <d)$ so that $f_d$ has the form $f_d = q^{-d}+O(q)$.
\begin{align*}
f_0=&1+2q+2q^4+2q^9+2q^{16}+O(q^{25})\\
f_3=&q^{-3}-248q+26752q^4-85995q^5+1707264q^8-4096248q^9+O(q^{12})\\
f_4=&q^{-4}+492q+143376q^4+565760q^5+18473000q^8+51180012q^9+O(q^{12})\\
\vdots
\end{align*}
The construction for the basis of $M_{\frac{3}{2}}^{!,+}(\Gamma_0(4))$, the $g_D$'s, is the same as that for the $f_d$'s, except we start at $g_1:=g$ and construct $g_4$ as we constructed $f_3$ (with $g$ in place of $\theta$). In particular, the space of holomorphic modular forms of weight $3/2$ on $\Gamma_0(4)$ in the plus space is empty, and so there are no $g_D$'s without a pole.

\begin{align*}
g_1=&q^{-1}-2+248q^3-492q^4+4119q^7-7256q^8+33512q^{11}-53008q^{12}+O(q^{15})\\
g_4=&q^{-4}-2-26752q^3-143376q^4-8288256q^7- 26124256q^8+O(q^{11})\\
g_5=&q^{-5}+ 0+85995q^3-565760q^4+52756480q^7-190356480q^8+O(q^{11})\\
\vdots
\end{align*}

A brief scan will show that the coefficients of $g_D$ appear as the negatives of the $D$-th coefficients of $f_d$. In fact, this duality is true in general, as described in the following theorem from \cite{ZAG}.

\begin{Theorem}\label{AB} Let $f_d = q^{-d} + \sum\limits_{D> 0} a(D,d)q^D$ and $g_D = q^{-D} + \sum\limits_{d \geq 0} b(D,d)q^d$, then \begin{enumerate}

\item For all $m \geq 0$, let $g_D | T\left(m^2\right) = \sum_d b_m(D,d)q^d$ and $f_d | T\left(m^2\right) = \sum_D a_m(D,d)q^D$. For all $D$ and $d$ such that $D \equiv 0,1 \pmod{4}$ and $d \equiv 0,3\pmod{4}$, 
\begin{align}
\label{aandb} a_m(D,d) = -b_m(D,d).
\end{align}
\item For any $m$, the following recursion holds $$a_m(1,d)=\sum\limits_{n|m}n\cdot a\left(n^2,d\right).$$
\end{enumerate}
\end{Theorem}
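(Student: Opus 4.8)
The crux is the $m=1$ case of part~(1): the identity $a(D,d)=-b(D,d)$ whenever both $f_d$ and $g_D$ are defined. This is Zagier's duality \cite{ZAG}, and the proof I have in mind follows his two‑variable kernel method (in the style of Asai--Kaneko--Ninomiya for the $j$-function). One exhibits a meromorphic kernel $\Omega(\tau,z)$, modular of weight $\tfrac32$ in $\tau$ and weight $\tfrac12$ in $z$ for $\Gamma_0(4)$ and holomorphic away from the divisor $\{j(4\tau)=j(4z)\}$, and checks that $\Omega(\tau,z)=\sum_{D}g_D(\tau)q_z^{D}$ for $\operatorname{Im}z\gg\operatorname{Im}\tau$ while $\Omega(\tau,z)=-\sum_{d}f_d(z)q_\tau^{d}$ for $\operatorname{Im}\tau\gg\operatorname{Im}z$; each identity is forced, since the right‑hand side is a plus‑space form of the correct weight in the relevant variable with the prescribed principal part, and $\{g_D\}$, $\{f_d\}$ are bases of $M^{!,+}_{3/2}(\Gamma_0(4))$, $M^{!,+}_{1/2}(\Gamma_0(4))$. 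Comparing the coefficient of $q_\tau^{d}q_z^{D}$ then yields $b(D,d)=-a(D,d)$, the polar monomials matching by the rational‑function identity $\sum_{D\ge1,\,D\equiv 0,1\,(4)}w^{D}=-\sum_{d\ge0,\,d\equiv 0,3\,(4)}w^{-d}$. (Alternatively: $f_dg_D$ is a weakly holomorphic weight‑$2$ form on $\Gamma_0(4)$ whose constant term at $\infty$ equals $a(D,d)+b(D,d)$, and one shows this constant term vanishes --- e.g.\ by the residue theorem once the poles of $f_d,g_D$ are known to be concentrated at $\infty$, or after transporting to level $1$, where $X(1)$ has a single cusp.) This base case is the one genuinely substantial point, and the one I expect to be the main obstacle; everything below is formal.

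Granting $m=1$, here is the deduction of part~(1) for general $m$. Take first $m=p$ an odd prime. Extracting the coefficient of $q^{D}$ from \eqref{Top} with $k=0$ and the weight‑$\tfrac12$ normalization $p\cdot T(p^2)$ of \S2.1 gives
\[
a_p(D,d)=p\,a(p^2D,d)+\leg{D}{p}a(D,d)+a(D/p^2,d),
\]
with $a(D/p^2,d)$ understood as $0$ unless $p^2\mid D$. Since $p$ is odd, $p^2D\equiv D/p^2\equiv D\pmod 4$, so the $m=1$ duality applies to every term and
\begin{align*}
a_p(D,d)&=-\Bigl(p\,b(p^2D,d)+\leg{D}{p}b(D,d)+b(D/p^2,d)\Bigr)\\
&=-[q^{d}]\Bigl(p\,g_{p^2D}+\leg{D}{p}g_D+g_{D/p^2}\Bigr).
\end{align*}
Now $p\,g_{p^2D}+\leg{D}{p}g_D+g_{D/p^2}$ and $g_D\,|\,T(p^2)$ are both weakly holomorphic plus‑space forms of weight $\tfrac32$ on $\Gamma_0(4)$, and \eqref{Top} shows they have the same principal part; since $\{g_D\}$ is a basis and $M^{+}_{3/2}(\Gamma_0(4))=\{0\}$, a weakly holomorphic plus‑space form is determined by its principal part, so the two forms coincide and $a_p(D,d)=-[q^{d}](g_D\,|\,T(p^2))=-b_p(D,d)$. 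For general $m$ one factors $T(m^2)=\prod_{p\mid m}T\bigl(p^{2v_p(m)}\bigr)$ (the operators at distinct primes commuting) and invokes \eqref{Toprec}, which after the weight‑$\tfrac12$ normalization takes the shape $X_j=X_{j-1}X_1-p\,X_{j-2}$ in both weights $\tfrac12$ and $\tfrac32$; an induction on $m$ repeating the argument above term by term completes part~(1).

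Part~(2) is a direct computation with the same operators. The quantity $a_m(1,d)$ is the coefficient of $q^{1}$ in $f_d\,|\,T(m^2)$. For $m=p$, \eqref{Top} gives $a_p(1,d)=p\,a(p^2,d)+\leg{1}{p}a(1,d)=a(1,d)+p\,a(p^2,d)$, the $V(p^2)$-term contributing nothing at exponent $1$ and $\leg{1}{p}=1$. Iterating by \eqref{Toprec} (equivalently \eqref{multiH}) and using that $\leg{n^2}{p}=1$ for $p\nmid n$ --- so every Legendre symbol that arises is trivial and only the ``$U$-towers'' carrying $q^{1}$ to $q^{p^{2i}}$ with weight $p^{i}$ survive --- one obtains $a_{p^{r}}(1,d)=\sum_{i=0}^{r}p^{i}\,a(p^{2i},d)$; chaining over the primes dividing $m$ yields $a_m(1,d)=\sum_{n\mid m}n\,a(n^{2},d)$. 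As noted, the only real difficulty is the $m=1$ duality underlying part~(1); after that, both parts reduce to bookkeeping with the Hecke relations of \S2.1.
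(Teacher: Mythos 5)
The paper never proves Theorem \ref{AB} itself: it is quoted verbatim from Zagier \cite{ZAG}, so there is no internal proof to compare against. Judged on its own, your deduction of the general-$m$ statement from the $m=1$ duality is correct and is in fact the same mechanism the paper uses elsewhere: you extract the coefficient formula on the $f_d$-side from \eqref{Top} (with the weight-$\frac12$ normalization handled correctly, so the recursion becomes $X_j=X_{j-1}X_1-pX_{j-2}$ in both weights), convert via $a(D,d)=-b(D,d)$, and then identify $p\,g_{p^2D}+\leg{D}{p}g_D+g_{D/p^2}$ with $g_D|T(p^2)$ by matching principal parts, using that weakly holomorphic plus-space forms of weight $\frac32$ on $\Gamma_0(4)$ are determined by their principal part since the holomorphic plus space vanishes. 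This is precisely the argument the authors run for the Folsom--Ono analogue (Theorem \ref{FOABM}, via Lemma \ref{BasisGD}) and the same principal-part reasoning as in Proposition \ref{HeckegD}; your part (2) computation is also consistent with what one gets from $g_1|T(m^2)=\sum_{n\mid m}n\,g_{n^2}$ together with duality.

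The one substantive caveat is the $m=1$ base case, which you correctly identify as the only real content but leave as a sketch. The existence of the two-variable kernel $\Omega(\tau,z)$ with the two asserted expansions is stated rather than established, and the claimed formal identity for the polar terms is loose as written; likewise the residue-theorem alternative for $f_dg_D$ (weight $2$ on $\Gamma_0(4)$) needs the contributions from the cusps $0$ and $\tfrac12$, the plus-space projection (or a trace down to level $1$), and the $d=0$ column (constant terms, e.g.\ $b(D,0)=-a(D,0)$ with $f_0=\theta$) handled explicitly --- this is exactly the content of Zagier's theorem. Within the framework of this paper that base case is an imported result, so your write-up would be complete if you either cite \cite{ZAG} for $a(D,d)=-b(D,d)$ or carry out one of the two sketched arguments in detail; as it stands, the hard step is flagged but not proven.
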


The following proposition describes the action of the Hecke operators $\Tp{2n}$ on these $g_D$, extending the $\Tp{2}$ case done in \S 6 of \cite{ZAG}.
\begin{Proposition} \label{HeckegD} Let $g_D$ be one of Zagier's weight 3/2 weakly holomorphic modular forms on $\Gamma_0(4)$. We write $D = p^{2v}\cdot j,$ where $p^2 \nmid j$ and $v \geq 0$. Then,  for all $0 \leq n < v$, we have
$$g_D | \Tp{2n} = \sum_{t = 0}^{n} p^{t}\cdot g_{p^{2v -2n+ 4t}\cdot j},$$
and for all $n \geq v$, we have
\begin{align}
\label{eqgD} g_D | \Tp{2n} = \sum_{t = 0}^{n-v} \leg{j}{p}^{n-v-t}p^t\cdot g_{p^{2t}j} + \sum_{t = 1}^{v} p^{n-v+t}\cdot g_{p^{2n -2v+ 4t}\cdot j}.
\end{align}
\end{Proposition}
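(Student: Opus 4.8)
The plan is to induct on $n$ using the Hecke recursion \eqref{Toprec}, which for weight $\frac32$ (so that $k=1$ and $p^{2k-1}=p$) reads
\[
g_D|\Tp{2n}=\bigl(g_D|\Tp{2n-2}\bigr)|\Tp{2}-p\cdot g_D|\Tp{2n-4},
\]
with $p$ an odd prime so that $p\nmid 4$. The conceptual engine is that $M^{+}_{3/2}(\Gamma_0(4))=\{0\}$, so every form in $M^{!,+}_{3/2}(\Gamma_0(4))$ is determined by its principal part; since $\Tp{2}$ preserves this space and hence, by \eqref{Toprec}, so does every $\Tp{2n}$, it suffices to identify $g_D|\Tp{2n}$ by matching principal parts against the basis $\{g_{D'}\}$. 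For the base case $n=1$, reading \eqref{Top} off term by term from $g_D=q^{-D}+\sum_{d\ge0}b(D,d)q^d$ shows that the principal part of $g_D|\Tp{2}$ is $\leg{D}{p}q^{-D}+p\,q^{-p^2D}$, together with $q^{-D/p^2}$ when $p^2\mid D$ (this is the $\Tp{2}$ case established in \S 6 of \cite{ZAG}); writing $D=p^{2v}j$ with $p^2\nmid j$, this gives $g_D|\Tp{2}=g_{p^{2v-2}j}+p\,g_{p^{2v+2}j}$ when $v\ge1$ and $g_j|\Tp{2}=\leg{j}{p}g_j+p\,g_{p^2j}$ when $v=0$, the $n=1$ instances of the two claimed formulas, under the convention that a term $g_{p^{2m}j}$ with $m<0$ does not occur because the accompanying power of $\leg{j}{p}$ is then $0$.

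For the inductive step I would substitute the inductive descriptions of $g_D|\Tp{2n-2}$ and $g_D|\Tp{2n-4}$ into the recursion, expand each summand $g_{D'}|\Tp{2}$ by the base case, and then reindex and simplify. In the ``deep'' regime $n<v$ every intermediate index still has $p$-adic valuation at least $2$, so only the first base identity is needed: one finds that $\bigl(g_D|\Tp{2n-2}\bigr)|\Tp{2}$ equals $g_{p^{2v-2n}j}+2\sum_{t=1}^{n-1}p^tg_{p^{2v-2n+4t}j}+p^ng_{p^{2v+2n}j}$, and subtracting $p\cdot g_D|\Tp{2n-4}$ deletes exactly one copy of each doubled middle term, leaving $\sum_{t=0}^{n}p^tg_{p^{2v-2n+4t}j}$ as claimed. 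In the regime $n\ge v$ the same manipulation, now also using $g_j|\Tp{2}=\leg{j}{p}g_j+p\,g_{p^2j}$ whenever an intermediate index drops to $p$-adic valuation less than $2$, should reassemble into the two-part expression \eqref{eqgD}; along the way one records that the two displayed formulas agree at $n=v$ (both equal $g_j+\sum_{t=1}^{v}p^tg_{p^{4t}j}$), so that the statement follows from a single induction rather than two.

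I expect the only genuine difficulty to be the combinatorial bookkeeping at and just past the crossover $n=v$: there the forms occurring in $\bigl(g_D|\Tp{2n-2}\bigr)|\Tp{2}$ split into those still of $p$-adic valuation at least $2$ (which contribute via $g_{D'}|\Tp{2}=g_{D'/p^2}+p\,g_{p^2D'}$) and the boundary ones of valuation less than $2$ (which contribute the factor $\leg{j}{p}$, equal to $0$ when $p\mid j$), and one must check both that the telescoping with $-p\cdot g_D|\Tp{2n-4}$ survives and that the repeated Legendre factors produced by iterating the base case assemble precisely into the powers $\leg{j}{p}^{\,n-v-t}$ and the index ranges $0\le t\le n-v$, $1\le t\le v$ of \eqref{eqgD}. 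Concretely I would split the inductive step into the subcases $n\le v-1$, $n=v$, $n=v+1$, and $n\ge v+2$, so that in each the inductive hypotheses at $n-1$ and $n-2$ are each governed by one explicit formula, and verify the cancellation and the reassembly of the Legendre powers in each; with the base cases $n=0,1$ in hand, everything else is routine reindexing.
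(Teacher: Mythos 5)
Your argument is correct, and its backbone is the same as the paper's: since the holomorphic weight $3/2$ plus space on $\Gamma_0(4)$ is trivial, forms in $M^{!,+}_{3/2}(4)$ are determined by their principal parts, so one only tracks principal parts under $\Tp{2}$ and the recursion \eqref{Toprec}; for $v=0$ your induction is essentially the paper's proof of \eqref{explicit}. Where you genuinely diverge is the case $v\geq 1$: the paper never re-runs the induction on $g_D$ itself, but instead inverts \eqref{explicit} to write $g_D=p^{-v}\bigl(g_j|\Tp{2v}-\leg{j}{p}g_j|\Tp{2v-2}\bigr)$ and then applies the composition law \eqref{multiH} to $\Tp{2v}\Tp{2n}$, reducing $g_D|\Tp{2n}$ in two lines to the already-proved $v=0$ expansion, whereas you induct on $n$ directly with the crossover subcases $n=v$, $n=v+1$, $n\geq v+2$. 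Your route is more elementary (it uses only the two-term recursion, not \eqref{multiH}) and treats the regimes $n<v$ and $n\geq v$ uniformly --- note the paper only sketches the $n<v$ case with ``follows similarly'' --- at the price of the telescoping bookkeeping you flag; I checked that this bookkeeping does close up (at $n=v$ the recursion yields $g_j+\sum_{t=1}^{v}p^t g_{p^{4t}j}$, at $n=v+1$ it yields $\leg{j}{p}g_j+pg_{p^2j}+\sum_{t=1}^{v}p^{t+1}g_{p^{2+4t}j}$, and for $n\geq v+2$ the two sums in \eqref{eqgD} reproduce themselves), so your plan is sound. The paper's method buys brevity and no case analysis; yours buys a self-contained induction that also makes the agreement of the two displayed formulas at $n=v$ transparent. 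One small cleanup: your parenthetical convention about terms $g_{p^{2m}j}$ with $m<0$ being suppressed ``because the accompanying power of $\leg{j}{p}$ is then $0$'' is unnecessary and slightly misleading --- in your base case no negative index ever arises, since the $U(p^2)$ contribution to the principal part occurs only when $p^2\mid D$.
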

\vspace{0.5cm}
\begin{proof}
We first consider $g_D$ where $D = j, p^2\nmid j$. Recall that $g_j$ looks like
$$g_j = q^{-j} + O(1).$$
Acting on $g_j$ with the $p^{2n}$-th Hecke operator, we look just at the principal part. First, the base case of $\Tp{2}$. From the definition in \eqref{Top}, 
\begin{align*}
g_j | \Tp{2} &= g_j | U\left(p^2\right) + g_j \otimes \chi(p,1) + p\cdot g_j | V\left(p^2\right)\\
&= p\cdot q^{-p^2j}+ \leg{j}{p}q^{-j} + O(1)\\
&= pg_{p^2j} + \leg{j}{p}g_j,
\end{align*}
where the last line follows from the fact that Hecke operators do not change the level of the form, so $g_j | \Tp{2}$ will remain in the space spanned by Zagier's forms. Since these form a basis, any form in this space is determined entirely by its principal part.
Proceeding inductively, assume that
\begin{align}\label{explicit} g_j | \Tp{2\ell} = \sum_{t = 0}^\ell \leg{j}{p}^{\ell-t}p^t\cdot g_{p^{2t}j}\end{align} holds for all $l\leq n$, for some $n\geq 0$.
Then, 
{\allowdisplaybreaks\begin{align}
\notag g_j | \Tp{2n+2} &= g_j | \Tp{2n}\Tp{2} - p\cdot g_j | \Tp{2n - 2}\\
\label{bbb} &= \left(\sum_{t = 0}^n \leg{j}{p}^{n-t}p^t\cdot g_{p^{2t}j}\right) | \Tp{2} -\sum_{t = 0}^{n-1} \leg{j}{p}^{n-t-1}p^{t+ 1}\cdot g_{p^{2t}j}
\end{align}
by applying \eqref{multiH}, so we have that \eqref{bbb} condenses to
\begin{align*}
&= \sum_{t = 1}^n \leg{j}{p}^{n-t}p^t\cdot g_{p^{2t-2}\cdot j} +\leg{j}{p}^{n+1}g_j 
\\
&= \sum_{t = 0}^{n+1} \leg{j}{p}^{n-t+1}p^{t}\cdot g_{p^{2t}j}.
\end{align*}}
Note that this proves the proposition for $m = 0$ since then the second sum in the statement will be empty. \\

We now consider $D = p^{2v}j, v \geq 1,$ where $p^2 \nmid j$. By \eqref{explicit}, we can rewrite $g_D$ as 
\[ g_D = \frac{g_j | \Tp{2v} - \leg{j}{p}g_j | \Tp{2v -2}}{p^v}.\]
Then, for $n \geq v$,
{\allowdisplaybreaks\begin{align*}
g_D | \Tp{2n} &= p^{-v} \left( g_j | \Tp{2v}\Tp{2n} - \leg{j}{p}g_j | \Tp{2v -2}\Tp{2n}\right)\\
&= p^{-v} g_j | \left(\sum_{t = 0}^v p^t \cdot \Tp{2n+ 2v -4t} - \sum_{t = 0}^{v-1}\leg{j}{p} p^t \cdot \Tp{2n+ 2v -4t-2}\right)\\
\intertext{again by applying \eqref{multiH}, so we have}
&= g_j | \Tp{2n-2v} + p^{-v} \sum_{t = 1}^{v} p^{n+t}\cdot g_{p^{2n-2v + 4t }\cdot j}\\
&= \sum_{t = 0}^{n-v} \leg{j}{p}^{n-v-t}p^t\cdot g_{p^{2t}j}+ \sum_{t = 1}^{v} p^{n-v+t}\cdot g_{p^{2n -2v+ 4t}\cdot j}.
\end{align*}}
The proof for $0 \leq n < v$ follows similarly, using the relation for $\Tp{2n}\Tp{2v}$ and summing over $t$ from $0$ to $n$, then reordering the sum.
\end{proof}
\vspace{0.5cm}
\begin{Proposition}\label{Heckefd}
Let $f_d$ be one of Zagier's weight 1/2 forms. We write $d = p^{2u}\cdot i,$ where $p^2 \nmid i$ and $u \geq 0$. Then, for all $0\leq n < u$, we have
$$f_d | \Tp{2n} = \sum_{t = 0}^{n}p^{n-t} f_{p^{2u-2n +4t}\cdot i},$$
and for $n \geq u$, we have
\begin{align}
\label{eqfd} f_d | \Tp{2n} = \sum_{t = 0}^{n-u} \leg{-i}{p}^{n-u-t}p^uf_{p^{2t}i} + \sum_{t = 1}^{u}p^{u-t} f_{p^{2n-2u +4t}\cdot i}.
\end{align}
\end{Proposition}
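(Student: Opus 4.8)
The plan is to follow the proof of Proposition~\ref{HeckegD} essentially verbatim; the only genuine difference is the base case $\Tp{2}$, where the weight-$\tfrac12$ normalization and the twisting operator $\otimes\chi(p,0)$ enter differently than they do in weight $\tfrac32$.

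First I would treat $d=i$ with $p^2\nmid i$, so that $f_i=q^{-i}+O(q)$. Applying the normalized operator $p\cdot\Tp{2}$ from \eqref{Top} (with $k=0$ and trivial Nebentypus) to $f_i$: the $U(p^2)$-term contributes nothing to the principal part since $p^2\nmid i$; the twist multiplies the $q^{-i}$ term by $\leg{(-1)^0(-i)}{p}=\leg{-i}{p}$; and $V(p^2)$ sends it to $q^{-p^2i}$, now with coefficient $p\cdot p^{2k-1}=1$. It is precisely this last point where weight $\tfrac12$ differs from weight $\tfrac32$: after normalization the $V$-coefficient is $1$, not $p$, which is why the formula being proved has no powers of $p$ when $u=0$. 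Since the $f_{d'}$ form a basis of $M_{\frac{1}{2}}^{!,+}(\Gamma_0(4))$ and Hecke operators preserve this space, $f_i|\Tp{2}$ is pinned down by its principal part together with its constant term; checking that the $f_0=\theta$ component vanishes then gives $f_i|\Tp{2}=\leg{-i}{p}f_i+f_{p^2i}$. An induction on $n$, identical in structure to the one in Proposition~\ref{HeckegD} and using the recursion \eqref{Toprec} and multiplication formula \eqref{multiH} for the normalized operators, then produces the analog of \eqref{explicit}, namely
\[
f_i|\Tp{2n}=\sum_{t=0}^{n}\leg{-i}{p}^{n-t}f_{p^{2t}i}\qquad (n\geq 0).
\]
Just as in the $g$-case, the index-lowering (``$U$-direction'') terms arising when $\Tp{2}$ hits the inductive hypothesis cancel against the $-p\,\Tp{2n-2}$ correction; the only difference is that the index-raising terms now carry coefficient $1$ rather than $p$, so no powers of $p$ accumulate.

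For the general case $d=p^{2u}i$ with $u\geq 1$, the displayed formula gives, for each $m\geq 1$, the telescoping identity $f_i|\Tp{2m}-\leg{-i}{p}\,f_i|\Tp{2m-2}=f_{p^{2m}i}$; taking $m=u$ expresses $f_d$ in terms of $f_i|\Tp{2u}$ and $f_i|\Tp{2u-2}$ (note there is no $p^{-u}$ prefactor here, unlike the corresponding step for $g_D$). Applying $\Tp{2n}$ to this, expanding $\Tp{2u}\Tp{2n}$ and $\Tp{2u-2}\Tp{2n}$ via \eqref{multiH}, then grouping consecutive terms and using the telescoping identity again, one reduces $f_d|\Tp{2n}$ to $p^u\,f_i|\Tp{2n-2u}+\sum_{t=1}^{u}p^{u-t}f_{p^{2n-2u+4t}i}$; substituting the displayed formula for $f_i|\Tp{2n-2u}$ yields the stated expression for $n\geq u$. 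The range $0\leq n<u$ is handled the same way, expanding $\Tp{2n}\Tp{2u}$ instead, summing $t$ from $0$ to $n$, and reordering, exactly as in the final line of the proof of Proposition~\ref{HeckegD}.

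I expect the only real difficulty to be bookkeeping: keeping the normalization straight so that the powers of $p$ come out exactly as stated (they differ from those in Proposition~\ref{HeckegD} precisely because of it), treating the degenerate case $p\mid i$ through the convention $0^0=1$, and justifying that the $f_0$-component always vanishes (there is no analogous subtlety for $g_D$, since the weight-$\tfrac32$ plus space contains no holomorphic forms). I would also point out that one cannot shortcut the proof via the duality $a_m(D,d)=-b_m(D,d)$ of Theorem~\ref{AB}: that identity pairs the grid coefficients at nonnegative exponents, whereas the basis decomposition of $f_d|\Tp{2n}$ is controlled by its principal part, so a direct computation is needed.
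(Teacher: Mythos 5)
Your proof is correct and takes essentially the same route as the paper, which simply reruns the argument of Proposition \ref{HeckegD} with the normalized weight-$\frac{1}{2}$ operator $f_d|\Tp{2} = p\, f_d|U\left(p^2\right) + f_d\otimes\chi(p,0) + f_d|V\left(p^2\right)$, so that the index-raising terms carry coefficient $1$ and the powers of $p$ redistribute as stated. You in fact supply details the paper leaves implicit -- pinning down the $f_0=\theta$ component via the constant term and noting the absence of a $p^{-u}$ prefactor -- and your twist factor $\leg{-i}{p}$ (i.e. $\chi(p,0)$, as in Lemma \ref{BasisFd}) is the convention consistent with the statement.
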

\begin{proof} The proof of this proposition follows in the same way as that of Proposition \ref{HeckegD}, with the exception that the weight $\frac{1}{2}$ Hecke operator is normalized as in \S 6 of \cite{ZAG}, so we instead have $$f_d | \Tp{2} = p\cdot f_d | U\left(p^2\right) + f_d \otimes \chi(p,1) + f_d | V\left(p^2\right).$$
\end{proof}
\subsection{Folsom-Ono Grid of Half-Integral Weight Forms}

In $3.5$ of \cite{FO}, Folsom and Ono introduce a grid of two sets of forms analogous to those of Zagier. Changing notation to avoid confusion with Zagier's $f_d,g_D$, let $\tilde{f}_m,\tilde{g}_m$ be the forms defined in Folsom and Ono's paper, using their notation with $m\in\mathbb{Z}$ as an index. We then define the sets $F_d$ and $G_D$ as follows, where $d,D$ respectively each denote the integral power from the only term in the principal part:
\begin{align*}
F_{24m - 23}(q) &= q^{-1}\tilde{f}_m\left(q^{24}\right) \hspace{3 cm} G_{24m - 1}(q) = -q\tilde{g}_m\left(q^{24}\right)\\
 &=-q^{-24m + 23} + O\left(q^{23}\right), \hspace{3.2 cm} = q^{-24m+1} + O(q).\\
\end{align*}
One set consists of weight 1/2 mock modular forms $F_d$. Note that $F_1(q)=q^{-1}\tilde{f}_1(q^{24})$, where $\tilde{f}_1(q)$ is Ramanujan's third order mock-theta function 
$$\tilde{f}_1(q) =f(q)= 1+ \sum_{n=1}^\infty \frac{q^{n^2}}{(1+q)^2(1+ q^2)^2\cdots(1+q^n)^2}.$$ 
The $F_d$'s have $q$-expansions as follows: $$F_d(q) : = -q^{-d} + \sum_{n=1}^\infty A(D,d)q^D.$$

Then, we have, in one direction,
\begin{align*}
F_1(q) &= -q^{-1} + q^{23} - 2q^{47} + 3q^{71} - \cdots\\
F_{25}(q) &= -q^{-25} - 263 q^{23} + 2781q^{47} - 17960q^{71} + \cdots\\
F_{49}(q)&= -q^{-49} + 3400q^{23} - 102060q^{47} + \cdots\\
F_{73}(q) &= -q^{-73} -23374q^{23}+\cdots\\
\end{align*}
Folsom and Ono prove that by grouping coefficients of the grid in columns, as in Zagier's case above, \S 3.3  of \cite{FO} formally defines a set of $q$-series $G_D$, which look like
\begin{align*}
G_D(q) := q^{-D} + \sum_{n=1}^\infty B(D,d)q^d.
\end{align*}
For example, we have
\begin{align*}
G_{23}(q) &= q^{-23} -q + 263 q^{25} -3400 q^{49} + 23374 q^{73} + \cdots\\
G_{47}(q)&= q^{-47} +2q-2781q^{25} + 102060q^{49} + \cdots\\
G_{71}(q) &= q^{-71} -3q+ 17960q^{25}+\cdots\\
\end{align*}
These $G_D$ are then proven to be weight 3/2 weakly holomorphic modular forms; more precisely, they live in $M^!_{\frac{3}{2}}(\Gamma_0(144),\chi_{12})$. In analogy with Theorem \ref{AB}, Folsom and Ono prove the following
\begin{Theorem}\label{FOAB} For $F_d$ and $G_D$ as above, we have the relation 
\begin{align}
\label{AandB} A(D,d)= -B(D,d).
\end{align}
\end{Theorem}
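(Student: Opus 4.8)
The plan is to reproduce, in the mock-modular setting of \cite{FO}, the mechanism behind Zagier's duality (Theorem \ref{AB}). Two facts carry the argument. First, a rigidity statement: on the relevant residue classes modulo $24$ the space of holomorphic weight $\tfrac32$ forms of the relevant level with Nebentypus $\chi_{12}$ is trivial (just as the weight $\tfrac32$ holomorphic plus space is trivial in Zagier's level-$4$ setting), so a weakly holomorphic form there is pinned down by its principal part; the analogous rigidity holds for the harmonic Maass completions $\widehat{F_d}$ of the $F_d$ once one knows, via \cite{GUER} and \cite{ZWEG}, what their shadows and holomorphic parts are. Hence to prove $A(D,d)=-B(D,d)$ it is enough to exhibit, for each $D$, a weight $\tfrac32$ weakly holomorphic form with principal part $q^{-D}$ and higher coefficients $-A(D,d)$, and then quote rigidity to identify it with $G_D$.

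The second fact is a pairing between the two weight strata. Since $\tfrac12+\tfrac32=2$, and the weight $\tfrac12$ theta-multiplier times the weight $\tfrac32$ multiplier with character $\chi_{12}$ combine to the integer-weight character $\chi_{12}$, the product $\widehat{F_d}\cdot G_{D'}$ is a weight $2$ object on the common level with Nebentypus $\chi_{12}$. I would apply the residue theorem to the associated twisted differential on the modular curve --- equivalently, pass to its holomorphic projection --- after checking that the contributions at all cusps other than $\infty$ vanish: both $F_d$ and $G_{D'}$ have their principal parts concentrated at $\infty$, and the support modulo $24$ forces their expansions at the remaining cusps to begin with strictly positive powers of the local parameter. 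What survives is the coefficient of $q^0$ in $F_d G_{D'}$, which upon convolving $F_d=-q^{-d}+\sum_D A(D,d)q^D$ with $G_{D'}=q^{-D'}+\sum_d B(D',d)q^d$ is a fixed linear combination of $A(D',d)$ and $B(D',d)$, corrected by the contribution of the non-holomorphic completion of $\widehat{F_d}$ --- whose shadow is an explicit unary theta series --- via the Bruinier--Funke pairing; putting these together collapses to $A(D',d)=-B(D',d)$.

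As an independent check I would run the Poincar\'e-series computation directly. Up to the addition of a holomorphic form (killed by rigidity), $\widehat{F_d}$ is a weight $\tfrac12$ Maass--Poincar\'e series $Q(-d,\tfrac12,N)$ and $G_D$ is a weight $\tfrac32$ Maass--Poincar\'e series $Q(-D,\tfrac32,N)$, so that $A(D,d)$ and $B(D,d)$ are the standard half-integral weight expansions in Kloosterman sums times Bessel functions. The Kloosterman sums are symmetric in their two indices, while passing from weight $\tfrac12$ to weight $\tfrac32$ interchanges the roles of those indices and of the Bessel index $1-\tfrac k2$; matching the two expansions produces the coefficient identity, and the overall minus sign is traceable to the $\epsilon_d$ factors in the half-integral transformation law and to the $(-1)^k$ in the Kronecker symbol defining the twist $\otimes\chi(p,k)$ (correspondingly, to the sign and index in the $\mathcal{M}$-Whittaker normalization of $Q(-m,k,N;\tau)$).

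The main obstacle is precisely the mock-modularity of the $F_d$: one cannot simply multiply two holomorphic weakly holomorphic forms and quote the residue theorem, so the non-holomorphic completion and its shadow must be carried through the whole computation (equivalently, in the Poincar\'e-series picture, the $I$-Bessel / incomplete-Gamma terms have to be shown to cancel between the two sides, and the sign has to be tracked with care). This is the step that genuinely uses the structure established in \cite{FO} and \cite{ZWEG}; the remainder --- keeping track of the multiplier systems, the character $\chi_{12}$, and the behaviour at the cusps of the relevant $\Gamma_0(N)$ --- parallels Zagier's treatment of level $4$.
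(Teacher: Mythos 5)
A point of orientation first: the paper does not prove Theorem \ref{FOAB} at all --- it is quoted from \cite{FO}, where the duality is obtained from the Poincar\'e-series representations of the two families (exact Bringmann--Ono-type formulas for the coefficients as Kloosterman sums weighted by Bessel functions, whose symmetry under exchanging the two indices and passing from weight $\tfrac12$ to weight $\tfrac32$ yields \eqref{AandB}). So your third paragraph is the route that actually matches the source, while your first two paragraphs contain genuine gaps.

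The rigidity claim is false as stated: $S_{\frac{3}{2}}(\Gamma_0(144),\chi_{12})$ is not trivial on the progression $1 \pmod{24}$ --- it contains $B_1 = q - 5q^{25} + 7q^{49} - 11q^{121} + \cdots$, which is supported exactly on exponents $\equiv 1 \pmod{24}$. Hence a weakly holomorphic weight $\tfrac32$ form of this level, character, and support is \emph{not} pinned down by its principal part, and ``exhibit a form with principal part $q^{-D}$ and coefficients $-A(D,d)$, then quote rigidity'' does not close; this is precisely the difficulty the paper's Lemma \ref{BasisGD} must overcome by invoking orthogonality of the $G_D$, as Poincar\'e series, to cusp forms. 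Second, the residue/pairing argument cannot ``collapse'' as asserted: the constant term of $F_dG_D$ is $A(D,d)-B(D,d)$, which by the theorem equals $2A(D,d)$ and is generically nonzero (for $F_1G_{23}$ it is $1-(-1)=2$). So a naive holomorphic residue argument proves the false identity $A(D,d)=B(D,d)$, and the correction coming from the non-holomorphic completion $\widehat{F_d}$ is not a final adjustment but the entire content of the statement; your sketch gives no computation of it. Likewise, in the Kloosterman/Bessel route the sign and index bookkeeping you defer (the $\epsilon_d$ factors, the Whittaker normalization, the relation between the weight $\tfrac12$ and weight $\tfrac32$ Kloosterman sums) is exactly where the minus sign in \eqref{AandB} originates, so as written the proposal is a plausible program --- essentially Folsom--Ono's --- but not yet a proof.
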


As noted in the introduction, due to work of Guerzhoy in \cite{GUER} and Zwegers in \cite{ZWEG}, these coefficients are integral.

Noting the similarity to the corresponding of the coefficients of the Zagier forms, we ask, as Zagier did, if the action of the Hecke operators preserves this duality. However, we first need to check that we can determine the action of the Hecke operators on $G_D$ and $F_d$ solely by looking at the principal part.

\begin{Lemma}\label{BasisGD}
The space spanned by Folson-Ono's forms $G_D \in S^!_{\frac{3}{2}}(\Gamma_0(144),\chi_{12})$,  supported on powers of $q$ which are congruent to $1 \pmod{24}$, is invariant under the action of Hecke operators $\Tps{12}{2n}$, where $p \nmid 144$, so the action of $\Tps{12}{2n}$ on any form in that space is determined entirely by its principal part.
\end{Lemma}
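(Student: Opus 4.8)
The plan is to argue in three stages: first establish that the relevant spaces have unique elements determined by principal parts, then show the Hecke operators preserve the defining congruence condition on exponents, and finally combine these to get the stated invariance. The key structural input is that the $G_D$ form a basis for $S^!_{\frac{3}{2}}(\Gamma_0(144),\chi_{12})$ restricted to exponents $\equiv 1 \pmod{24}$, so that the argument mirrors the basis argument already used in the proof of Proposition \ref{HeckegD}, where any form in the span of Zagier's $g_D$ was pinned down by its principal part.

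First I would recall that $M^{!,*}_{\frac{3}{2}}(144,\chi_{12})$ is, by definition, the space of weight $\frac{3}{2}$ weakly holomorphic forms of level $144$ and Nebentypus $\chi_{12}$ whose Fourier coefficients are supported on exponents $n$ with $n \equiv (-1)^{k+1} = 1 \pmod{24}$, and that the $G_D$ span it with the $G_D$ indexed by distinct values $D \equiv -1 \equiv 23 \pmod{24}$ in the principal part. The crucial point is uniqueness: if two forms in this space share the same principal part, their difference is a weakly holomorphic form with no principal part supported on exponents $\equiv 1 \pmod{24}$; since the plus-space / congruence condition rules out a nonzero holomorphic form here (the analogue of the statement that $S_{\frac{3}{2}}$ in Kohnen's plus space of the relevant level is trivial, cf.\ the remark that there are no $g_D$ without a pole), the difference must vanish. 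So the span is identified with its image under "take the principal part."

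Next I would check that $\Tps{12}{2}$ — and hence by the recursion \eqref{Toprec} all $\Tps{12}{2n}$ — sends this space to itself. For this, examine the three terms in \eqref{Top}: the operator $U(p^2)$ sends $q^n \mapsto q^{n/p^2}$ when $p^2 \mid n$ (and kills other terms), $V(p^2)$ sends $q^n \mapsto q^{p^2 n}$, and the twist $f \otimes \chi(p,k)$ only multiplies coefficients by a Kronecker symbol; since $p \nmid 144$, in particular $p$ is coprime to $24$, so $p^2 \equiv$ a unit mod $24$ and multiplication or division by $p^2$ preserves the residue class $1 \pmod{24}$ on exponents. Thus each of the three pieces preserves the support condition, and since the Hecke operators are known to preserve modularity (changing level among divisors of $N p^2$ but here landing back in level $144$ because $p \nmid 144$ — this is the standard fact cited from section 3.2 of \cite{Ken}), the image of a $G_D$ under $\Tps{12}{2n}$ lies again in $S^!_{\frac{3}{2}}(\Gamma_0(144),\chi_{12})$ with exponents $\equiv 1 \pmod{24}$, i.e.\ in the span of the $G_D$.

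Finally, combining the two: for any $G_D$, the form $G_D | \Tps{12}{2n}$ lies in the span of the $\{G_D\}$, and by the uniqueness established above it is the unique such form with its particular principal part; hence it is determined entirely by that principal part, which is what the lemma asserts. The main obstacle I anticipate is the uniqueness/triviality step — one must be careful that "a weakly holomorphic form of weight $\frac{3}{2}$, level $144$, Nebentypus $\chi_{12}$, supported on exponents $\equiv 1 \pmod{24}$, and holomorphic at all cusps" is genuinely forced to be zero; this requires either invoking the explicit correspondence with Zagier-type forms via the substitution $q \mapsto q^{24}$ that underlies the definitions of $F_d, G_D$ in \S 2.3 (reducing to the known emptiness of the weight $3/2$ plus space on $\Gamma_0(4)$), or a direct valence-count / dimension argument at level $144$. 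Everything else is bookkeeping with the definitions of $U$, $V$, the twist, and the residue class $1 \pmod{24}$.
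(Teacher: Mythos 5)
You correctly handle the easy half (the Hecke operator preserves the support condition because $p\nmid 144$ implies $p$ is coprime to $24$ and hence $p^2\equiv 1\pmod{24}$, and $U(p^2)$, $V(p^2)$, and the twist all preserve modularity at level $144$); this matches the paper. The genuine gap is in your uniqueness step, which you yourself flag as the main obstacle: you claim that a weight $\frac{3}{2}$ form of level $144$ and Nebentypus $\chi_{12}$, holomorphic at the cusps and supported on exponents $\equiv 1\pmod{24}$, must vanish, and you propose to prove this either by a $q\mapsto q^{24}$ reduction to the emptiness of the holomorphic weight $\frac{3}{2}$ plus space on $\Gamma_0(4)$ or by a dimension count at level $144$. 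That claim is false, and both proposed fixes fail: the paper points out that $S_{\frac{3}{2}}(\Gamma_0(144),\chi_{12})$ has dimension $2$, spanned by $B_1=q-5q^{25}+7q^{49}-11q^{121}+\cdots$ and $B_2=q^4-2q^{16}+\cdots$, and $B_1$ is supported exactly on the progression $1\pmod{24}$. So the difference $G-G'$ of a Hecke image and the matching linear combination of $G_D$'s is not automatically zero; a priori it can be a nonzero multiple of $B_1$, and no valence or dimension argument can rule this out.

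The paper closes this gap using the Poincar\'e series structure of the $G_D$ in two ways that your proposal does not supply. First, to know that $G-G'$ is even a holomorphic cusp form one needs control at \emph{all} cusps, not just at infinity: the $G_D$ are (multiples of) Poincar\'e series with zero principal part and zero constant term at every cusp other than infinity, and Hecke operators introduce neither poles nor constant terms there, so $G-G'\in S_{\frac{3}{2}}(\Gamma_0(144),\chi_{12})$. Second, and decisively, the $G_D$ are orthogonal to all cusp forms with respect to the regularized Petersson inner product, and via the adjoint relation $\langle f|\Tps{12}{2n},g\rangle=\chi(n)\langle f,g|\Tps{12}{2n}\rangle$ this orthogonality passes to $G=G_{D_0}|\Tps{12}{2n}$; hence $\langle G-G',B_1\rangle=0$, and since $G-G'=a_1B_1$ with $\langle B_1,B_1\rangle\neq 0$, one gets $a_1=0$. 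Without this orthogonality input (or some substitute for it), your argument does not establish that the Hecke image is determined by its principal part, which is the entire content of the lemma.
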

\begin{proof} First, let $G$ be a form in the aforementioned space such that $G = G_{D_0}| \Tps{12}{2n}$ for some $D_0$. Note that $D_0 = 24r + 1$ for some $r \in \mathbb{Z}, r \geq 1$, and that $G_{D_0}$ is supported only on coefficients equivalent to $1 \pmod{24}$.

We first reexamine the Hecke operators under this Nebentypus to verify that $G$ is supported on the correct arithmetic progression of exponents. We recall from \eqref{Top} that, $$\Tps{12}{2} = U\left(p^2\right) + \leg{12}{p}\chi(p,1) + pV\left(p^2\right).$$ It is a well-known fact that for all $p \geq 5$, $p$ prime, $p^2 \equiv 1 \pmod{24}$, so we know that the space of $G_D$ where $D \equiv 1 \pmod{24}$ is Hecke-invariant, since any action of Hecke operators will change $q$-powers of $n$ to a linear combination of $q$-powers $p^{2t}n \equiv n \pmod{24}$. Thus, the resulting forms under Hecke transformations will also be supported on powers of $q$ which are equivalent to $1 \pmod {24}.$\\

Let us denote the principal part of $G$ as $\sum\limits_{\substack{n<0\\n \equiv 1 \pmod{24}}} c_nq_n$. We can then construct a sum of $G_D$'s to have the same principal part as $G$: $$G^\prime = \sum\limits_{\substack{n<0\\n \equiv 1 \pmod{24}}}c_n G_n.$$
Since the $G_D$ are multiples of Poincar\'{e} series (see section $3.5$ in \cite{FO} for the proof) constructed to have principal part zero at all other cusps but infinity, $G^\prime$ will have principal part zero at all other cusps as well. Furthermore, since the Hecke operator $T_{12}$ cannot introduce a principal part, $G$ will also have principal part zero at all other cusps. We also know, from \cite{FO}, that the $G_D$ have constant term zero at all cusps, so since $T_{12}$ also cannot introduce a constant term,  $G$ also has constant term zero at all cusps. Therefore, we have that $G - G^\prime \in S_{\frac{3}{2}}(\Gamma_0(144), \chi_{12})$, the space of holomorphic cusp forms with the same level and Nebentypus. If this space were empty, we would know that $G-G^\prime$ would be zero, and as a result that $G$ would be uniquely determined by its principal part. However, this space has dimension 2, and is generated by
\begin{align*}
B_1 &= q-5q^{25}+7q^{49}-11q^{121}+13q^{169}+ \cdots\\
B_2 &= q^4-2q^{16}+4q^{64}-5q^{100}+\cdots
\end{align*}
Thus, $G - G^\prime  = a_1B_1,$ since $B_2$ is supported on the wrong progression. However, we also claim that $a_1 = 0$, once again using the fact that the $G_D$ are Poincar\'{e} series. Namely, we know by \cite{ALF}  that they are orthogonal to all cusp forms, i.e. $\langle G_D, g\rangle = 0$ for a cusp form $g$, where $\langle\cdot,\cdot\rangle$ denotes the regularized Petersson inner product (see \S2.1 and \cite{BOR,BRO,unearthing}). 
We know that for any $f\in M^!_k$ and $g\in S_k$, $$\langle f|\Tps{12}{2n},g\rangle=\chi(n)\cdot \langle f,g|\Tps{12}{2n}\rangle.$$ Then, since $\langle G_D,g\rangle = 0$ for any cusp form $g$, 
$$\langle G_D | \Tps{12}{2n},g\rangle = \chi(n) \langle G_D, g|\Tps{12}{2n}\rangle = 0,$$ for $n$ coprime to the level, since the space of cusp forms is Hecke-invariant under $\Tps{12}{2n}$.
From here, note that the Petersson inner product is linear in the first argument and some linear algebra will show that, in fact, $a_1 = 0$.
We can extend this argument to $G = \left(\sum_D G_D\right) | \Tps{12}{2n}$ by noting that $$G = \left(\sum_D G_D\right) | \Tps{12}{2n} =  \sum_D \left( G_D | \Tps{12}{2n}\right),$$ so if we again form $G^\prime$ a sum of $G_n$'s with the same principal part as $G$, then $$\langle G-G^\prime,B_1\rangle = \langle G, B_1\rangle = \sum_D \langle G_D | \Tps{12}{2n}, B_1\rangle = 0.$$ Thus, the action of Hecke operators $\Tps{12}{2n}$ on the space spanned by Folsom-Ono's $G_D$'s is entirely determined by the principal part.
\end{proof}

\begin{Lemma}\label{BasisFd}
The space spanned by Folsom-Ono's forms $F_d \in S^!_{\frac{1}{2}}(\Gamma_0(144),\chi_{12})$,  supported on powers of $q$ which are congruent to $-1 \pmod{24}$, is invariant under the action of the normalized Hecke operators $\Tps{12}{2n}$, where $p \nmid 144$, so the action of $\Tps{12}{2n}$ on any form in that space is determined entirely by its principal part.
\end{Lemma}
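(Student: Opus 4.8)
The plan is to mirror, almost verbatim, the argument given for Lemma~\ref{BasisGD}, making only the modifications forced by the change in weight ($\tfrac12$ instead of $\tfrac32$), in the relevant arithmetic progression ($-1\pmod{24}$ instead of $1\pmod{24}$), and by the normalization of the weight $\tfrac12$ Hecke operator (as in Proposition~\ref{Heckefd} and \S6 of \cite{ZAG}). First I would record that any $F_d$ is supported on exponents $\equiv -1\pmod{24}$, since $d=24m-23\equiv 1\pmod{24}$ forces $-d\equiv -1\pmod{24}$, and then observe that because $p^2\equiv 1\pmod{24}$ for every prime $p\geq 5$, each of the three constituent operators $U(p^2)$, the twist $\chi(p,1)$, and $V(p^2)$ in the normalized $\Tps{12}{2}$ sends a $q$-power $q^n$ to $q$-powers $q^{p^2n}$, $q^n$, or $q^{p^2n}$ respectively, all with exponent $\equiv n\pmod{24}$; hence the span of the $F_d$, supported on $-1\pmod{24}$, is preserved by $\Tps{12}{2n}$ for all $n$ coprime to $144$.

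Next, given $F = F_{d_0}\mid \Tps{12}{2n}$ (and then, by linearity, a general element $\sum_d F_d\mid\Tps{12}{2n}$), I would subtract off a finite linear combination $F' = \sum_{n<0,\,n\equiv -1(24)} c_n F_n$ matching the principal part of $F$. As in the proof of Lemma~\ref{BasisGD}, since the $F_d$ are (essentially) Maass--Poincar\'e series constructed to have vanishing principal part and vanishing constant term at all cusps other than $\infty$, and since the Hecke operator introduces neither a principal part nor a constant term at any cusp, the difference $F - F'$ lies in the space $S_{\frac12}(\Gamma_0(144),\chi_{12})$ of holomorphic cusp forms of weight $\tfrac12$. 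Here the key structural input is the theorem of Serre and Stark: the space of holomorphic modular forms of weight $\tfrac12$ is spanned by unary theta series, and a quick check of the level and Nebentypus together with the support condition $-1\pmod{24}$ shows that no such theta series can contribute — a weight $\tfrac12$ theta series $\sum_n q^{tn^2}$ is supported on exponents of the form $tn^2$, which (for the relevant $t$ dividing the level) are never $\equiv -1\pmod{24}$ since $-1$ is not a square mod $3$. So in fact $S_{\frac12}(\Gamma_0(144),\chi_{12})$ restricted to the correct progression is zero, and we immediately get $F = F'$, i.e. $F$ is determined by its principal part.

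This last point is actually slightly cleaner than the $G_D$ case, where the analogous cusp form space was nonzero and one needed the orthogonality of Poincar\'e series to cusp forms (via $\langle F_d\mid\Tps{12}{2n}, g\rangle = \chi(n)\langle F_d, g\mid\Tps{12}{2n}\rangle = 0$) to kill the extra term. If one prefers a uniform treatment, I would keep that orthogonality argument in place as a fallback: $F_d$ orthogonal to all cusp forms, the cusp form space Hecke-stable, hence $\langle F - F', g\rangle = 0$ for every $g\in S_{\frac12}(\Gamma_0(144),\chi_{12})$, forcing $F - F' = 0$.

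The main obstacle, and the only place requiring genuine care rather than transcription, is the weight $\tfrac12$ bookkeeping: one must make sure the normalized operator $\Tps{12}{2} = p\,U(p^2) + \leg{12}{p}\chi(p,1) + V(p^2)$ (note the shifted powers of $p$ relative to the weight $\tfrac32$ case) still has the three-progression-preserving property — it does, since multiplying by powers of $p$ does not touch exponents — and that the recursion \eqref{Toprec} and the multiplicativity \eqref{multiH} hold with the correct value $2k-1 = 0$ in weight $\tfrac12$, so that $\Tps{12}{2n}$ for $n\geq 1$ is still a polynomial in $\Tps{12}{2}$ and therefore also preserves the span of the $F_d$. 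Once that is checked, the remainder of the proof is identical in form to Lemma~\ref{BasisGD}, and I would simply say so rather than repeat it.
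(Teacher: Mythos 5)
There is a genuine gap, and it is exactly at the point where your argument diverges silently from the actual setting: the $F_d$ are \emph{mock modular} forms of weight $\tfrac12$ (holomorphic parts of harmonic Maass forms), not weakly holomorphic modular forms. They do not transform modularly on their own, so after matching principal parts you cannot conclude that $F-F'$ lies in $S_{\frac12}(\Gamma_0(144),\chi_{12})$, nor can you speak of its behavior "at the cusps" as if it were a classical form; a priori $F-F'$ is only the holomorphic part of a weight $\tfrac12$ harmonic Maass form, and its failure of modularity is measured by a shadow that you have not controlled. This is precisely why the paper's proof of this lemma is \emph{not} a transcription of Lemma \ref{BasisGD}: instead, one passes to the harmonic Maass forms $H=H_{d_0}|\Tps{12}{2n}$ and $H'$ completing $F$ and $F'$, uses Guerzhoy's result that the shadow of such a weight $\tfrac12$ harmonic Maass form is a cusp form, and then uses the fact (Theorem 3.6 of \cite{BF}) that a harmonic Maass form with nonzero shadow must have a pole at some cusp. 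Since $H-H'$ has zero principal part at all cusps, its shadow vanishes and $H-H'=0$; finally, because Hecke operators respect the splitting into holomorphic and non-holomorphic parts, $F=F'$. None of these steps appears in your proposal, and without them the central claim "$F-F'$ is a holomorphic cusp form" is unsupported.

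Your secondary points are also shakier than in the $G_D$ case. The fallback orthogonality argument $\langle F_d|\Tps{12}{2n}, g\rangle=0$ implicitly pairs a mock modular object with cusp forms; for harmonic Maass forms this regularized pairing is governed by the shadow and is not simply zero, so it cannot be quoted as in Lemma \ref{BasisGD}. The Serre--Stark observation (no weight $\tfrac12$ theta series at this level supported on exponents $\equiv -1\pmod{24}$) is a reasonable ingredient, but your stated reason ("$-1$ is not a square mod $3$") is not by itself correct for all admissible $t$; one must actually run through the divisors $t$ allowed by the level and check both the mod $3$ and mod $8$ conditions. The bookkeeping you do carry out (support on $-1\pmod{24}$, $p^2\equiv 1\pmod{24}$, the normalized operator $\Tps{12}{2}=pU(p^2)+\leg{12}{p}\chi(p,0)+V(p^2)$ preserving the progression, and $\Tps{12}{2n}$ being polynomial in $\Tps{12}{2}$) matches the paper and is fine; the missing idea is the harmonic Maass form/shadow argument that replaces the cusp-form comparison used for the $G_D$.
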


\begin{proof}
As in the case of the weight $\frac{1}{2}$ Zagier forms, we use the normalized Hecke operators $\Tps{12}{2}  = pT_{\frac{1}{2},{12}}\left(p^{2}\right)$. So by \eqref{Top}, $$\Tps{12}{2} = pU\left(p^2\right) + \leg{12}{p}\chi(p,0) + V\left(p^2\right).$$
As in Lemma \ref{BasisGD}, for $p \geq 5$, $p$ prime, we know that and $p^2 \equiv 1 \pmod{24}$, so the action of the Hecke operator will preserve the arithmetic progression of the exponents, which are all supported on $q$-powers equivalent to $-1 \pmod{24}$.
Then, since Hecke operators are additively distributive, it suffices to consider $F$ a form in the aforementioned space such that $F = F_{d_0} |\Tps{12}{2n}$. Using the normalization, $F$ will have principal part with integral coefficients supported on $q$-powers $\equiv -1 \pmod{24}$, so we can construct a linear combination $F^\prime  = \sum_\alpha c_\alpha F_\alpha$ to have the same principal part as $F$. Now, we take the harmonic Maass form $H=H_{d_0}|\Tps{12}{2n}$ associated to $F$ and take the harmonic Maass form $H^\prime$ associated to $F^\prime$. We are interested in $F - F^\prime$ so we first consider $H-H^\prime$, a weight $\frac{1}{2}$ harmonic Maass form. In \cite{GUER}, Guerzhoy proves that such a harmonic Maass form will have a shadow which is a cusp form. However, then, $H - H^\prime$ has zero principal part at all cusps and thus must be $0$, since every harmonic Maass form with nonzero shadow must have a pole at some cusp, which follows from Theorem 3.6 of \cite{BF}. Then, since Hecke operators respect the holomorphic and non-holomorphic parts of harmonic Maass forms, $F-F^\prime = 0$, so the action of Hecke operators on individual $F_{d}$ is entirely determined by principal part.

\end{proof}

Denote the coefficients under the action of the Hecke operators as $F_d |_{\frac{1}{2}} T\left(m^2\right) = \sum_D A_m(D,d)$ and $G_D |_{\frac{3}{2}} T\left(m^2\right) = \sum_d B_m(D,d)$. 
\begin{Theorem}\label{FOABM}
For all $m, D,d \in \mathbb{Z}$, 
\begin{align}
\label{AandBm}
A_m(D,d) = -B_m(D,d).
\end{align}
\end{Theorem}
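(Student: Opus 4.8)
The plan is to mirror the proof of Theorem~\ref{AB}(1) for Zagier's forms, now in the Folsom--Ono setting, using the duality of Theorem~\ref{FOAB} together with Lemmas~\ref{BasisGD} and~\ref{BasisFd}, which guarantee that the Hecke action on either grid space is determined by the principal part. First I would reduce to the case $m = p$ a prime, since the recursion \eqref{Toprec} (and its multiplicative consequence \eqref{multiH}) lets us build $T(p^{2r})$ out of $T(p^2)$, and the general $T(m^2)$ out of the prime-power cases by multiplicativity of Hecke operators; thus it suffices to prove $A_p(D,d) = -B_p(D,d)$ for $p \nmid 144$ and then invoke $\langle\cdot,\cdot\rangle$-free bookkeeping to propagate. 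For a single prime, I would write out $G_D | \Tps{12}{2}$ and $F_d | \Tps{12}{2}$ explicitly from \eqref{Top}: for the weight $3/2$ form, $U(p^2) + \leg{12}{p}\chi(p,1) + p\,V(p^2)$, and for the weight $1/2$ form the normalized version $pU(p^2) + \leg{12}{p}\chi(p,0) + V(p^2)$. Applying $U(p^2)$, $V(p^2)$, and the twist to the $q$-expansions $G_D = q^{-D} + \sum B(D,d)q^d$ and $F_d = -q^{-d} + \sum A(D,d)q^D$ produces explicit formulas for $B_p(D,d)$ and $A_p(D,d)$ in terms of the untwisted coefficients $B(\cdot,\cdot)$, $A(\cdot,\cdot)$ evaluated at arguments scaled by $p^{\pm 2}$ and weighted by Kronecker symbols and powers of $p$.

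The key identity to extract is that $B_p(D,d)$ is a symmetric expression in the roles of $D$ and $d$ up to the overall sign discrepancy coming from the different weights: concretely, $B_p(D,d) = B(p^2 D, d) + p\,\leg{12}{p}\leg{\text{(sign)}\cdot d}{p} B(D,d) \cdot[p^2\mid d] + \dots$, and likewise for $A_p(D,d)$, and then the duality $A(D,d) = -B(D,d)$ from Theorem~\ref{FOAB} applied termwise shows $A_p(D,d) = -B_p(D,d)$. Here the Kronecker-symbol bookkeeping requires care: because $D \equiv -1 \pmod{24}$ and $d \equiv 1 \pmod{24}$, the relevant quadratic characters $\leg{-d}{p}$ (appearing in the weight $1/2$ twist $\chi(p,0)$, which inserts $\leg{n}{p}$) and the weight $3/2$ twist $\chi(p,1)$ (which inserts $\leg{-n}{p}$) must be matched up so that the twisted terms on the two sides agree after the sign flip. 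I expect this matching of the twist contributions — verifying that $\leg{-D}{p}$ on the $G$-side equals $\leg{-d}{p}$ on the $F$-side in the cross terms, given the congruence conditions modulo $24$ and the constraint $p^2 \equiv 1 \pmod{24}$ — to be the main obstacle, though it is ultimately an elementary Kronecker-symbol computation rather than a deep point.

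Once the prime case is in hand, I would promote it to all $m$: write $m = p_1^{a_1}\cdots p_k^{a_k}$, use multiplicativity $T(m^2) = \prod T(p_i^{2a_i})$ together with \eqref{Toprec} to express $T(p^{2a})$ as an integer polynomial in $T(p^2)$ with coefficients not involving the grid coefficients, and observe that since the relation $A_m(D,d) = -B_m(D,d)$ is linear in the coefficient arrays and is preserved by each $T(p^2)$, it is preserved by any such polynomial and any such product. A cleaner alternative, and the one I would actually write up, is to bypass the recursion entirely: apply $D^{k-1}$-type orthogonality (Lemma stated in \S2.1, Theorem~7.8 of \cite{unearthing}) together with the fact that $G_D$, $F_d$ are (multiples of) Poincar\'e series orthogonal to cusp forms to argue directly that $G_D | T(m^2)$ and $F_d | T(m^2)$ are again in the respective grid spaces, hence determined by principal parts; then compute the principal part of each Hecke image from \eqref{Top}–\eqref{multiH}, note the principal parts are negatives of each other by the symmetry of the $U$/$V$/twist action under swapping the two indices, and conclude $A_m(D,d) = -B_m(D,d)$ from the already-established base duality \eqref{AandB}. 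The fact that $F_d$ is only mock modular rather than modular is handled exactly as in Lemma~\ref{BasisFd}, by passing to the associated harmonic Maass form and using that Hecke operators respect the holomorphic/non-holomorphic splitting.
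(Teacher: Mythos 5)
Your toolkit is the right one (reduction to $m=p$, the principal-part Lemmas \ref{BasisGD}/\ref{BasisFd}, and the base duality \eqref{AandB}), and your ``cleaner alternative'' is in outline the paper's proof; but the mechanism you describe for the key step has a genuine gap. Expanding both Hecke actions directly on the $q$-expansions gives $B_p(D,d) = B(D,p^2d) + \leg{12}{p}\leg{-d}{p}B(D,d) + pB(D,p^{-2}d)$ (the weight-$3/2$ operator scales the coefficient index $d$) and $A_p(D,d) = pA(p^2D,d) + \leg{12}{p}\leg{D}{p}A(D,d) + A(p^{-2}D,d)$ (the normalized weight-$1/2$ operator scales $D$). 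Applying $A=-B$ termwise to these two expressions does \emph{not} yield $A_p=-B_p$: the scalings hit different arguments, the factor of $p$ sits on different terms, and the characters are $\leg{-d}{p}$ versus $\leg{D}{p}$; the assertion that these right-hand sides agree after a sign flip \emph{is} the theorem, so ``termwise duality shows it'' is circular as stated. The missing move --- the one the paper makes --- is to pivot one side via the principal-part lemma: since $B(D,d)=\delta_{-D,d}$ for $d<0$, one computes the principal part of $G_D|\Tps{12}{2}$ explicitly, and Lemma \ref{BasisGD} then forces $G_D|\Tps{12}{2} = G_{p^{-2}D} + \leg{12}{p}\leg{D}{p}G_D + pG_{p^2D}$. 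Only after this conversion of $d$-scaling into $D$-scaling does termwise duality identify the resulting coefficient expression with $-A_p(D,d)$ via the weight-$1/2$ definition. (Equivalently one may pivot on the $F$-side with Lemma \ref{BasisFd}; exactly one pivot is needed, not zero and not two.)

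Relatedly, the obstacle you single out --- matching $\leg{-D}{p}$ against $\leg{-d}{p}$ via congruences mod $24$ --- is not where the difficulty lies and is in fact a non-issue: the twist term contributes $\leg{-d}{p}$ only against $\delta_{-D,d}$ in the principal part, where $d=-D$ forces $\leg{-d}{p}=\leg{D}{p}$ with no appeal to $D\equiv -1$, $d\equiv 1 \pmod{24}$. Your sample formula ($B_p(D,d)=B(p^2D,d)+p\leg{12}{p}\leg{\pm d}{p}B(D,d)\cdot[p^2\mid d]+\dots$) also misplaces the factor of $p$ and the divisibility condition. With the pivot inserted at the right spot, and the passage from prime $p$ to general $m$ handled by the recursion \eqref{Toprec} as you indicate, the argument closes and coincides with the paper's.
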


\begin{proof} It suffices to prove the statement for $m = p$ prime. By the definition of the $3/2$ weight Hecke operator in \eqref{Top}, $$B_p(D,d) = B(D,p^2d) + \leg{12}{p}\leg{-d}{p}B(D,d) + pB(D,p^{-2}d).$$
If we consider $d < 0$, we have from the definition of the $G_D$ that $$B(D,d) = \delta_{-D,d},$$ where $\delta_{i,j}$ is the Kronecker $\delta-$function. Thus,
\begin{align*}
B_p(D,d) &= \delta_{-D,p^2d} + \leg{12}{p}\leg{-d}{p}\delta_{-D,d} + p\delta_{-D,p^{-2}d}\\
&= \delta_{-p^{-2}D,d} + \leg{12}{p}\leg{D}{p}\delta_{-D,d} + p\delta_{-p^2D,d}.
\end{align*}
Since the action of the Hecke operators on the $G_D$ is determined entirely by their principal parts, by Lemma \ref{BasisGD},
$$G_D | \Tps{12}{2} = G_{p^{-2}D} + \leg{12}{p}\leg{D}{p}G_D + pG_{p^2D}.$$
Therefore, for all $ D,d \in\mathbb{Z}$, 
\begin{align*}
B_p(D,d) &= B(p^{-2}D,d) + \leg{12}{p}\leg{D}{p} B(D,d) + pB(p^2D,d)\\
&= -A(p^{-2}D,d) - \leg{12}{p}\leg{D}{p}A(D,d) - pA(p^2D,d)\\
\intertext{by \eqref{AandB}, so we have that}
B_p(D,d) &= -A_p(D,d)
\end{align*}
by the definition of the Hecke operators for weight $1/2$ in \eqref{Top}.
\end{proof}
Moreover, since these results appear so similar to those in the previous section, it is natural to ask the analogous questions answered in Propositions \ref{HeckegD} and \ref{Heckefd}. 

\begin{Proposition} \label{HeckeGD} Let $G_D$ be one of Folsom-Ono's weight 3/2 weakly holomorphic cusp forms on $\Gamma_0(144)$ with Nebentypus $\chi_{12}$. 
We write $D = p^{2v}\cdot j, v \geq 0,$ where $p^2 \nmid j$. 
Then,  for all $0 \leq n < v$, we have
$$G_D | \Tps{12}{2n}= \sum_{t = 0}^{n} p^{t}\cdot G_{p^{2v -2n+ 4t}\cdot j},$$
and for all $n \geq v$, we have
\begin{align}
\label{eqGD}G_D | \Tps{12}{2n}= \sum_{t = 0}^{n-v} \leg{12j}{p}^{n-v-t}p^t\cdot G_{p^{2t}j} + \sum_{t = 1}^{v} p^{n-v+t}\cdot G_{p^{2n -2v+ 4t}\cdot j}.
\end{align}
\end{Proposition}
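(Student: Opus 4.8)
The plan is to follow the proof of Proposition~\ref{HeckegD} essentially line by line, the only genuinely new feature being the Nebentypus $\chi_{12}$, which replaces the Kronecker symbol $\leg{j}{p}$ everywhere by $\leg{12j}{p}$. As a first step I would reduce to the case $D = j$ with $p^2 \nmid j$: by Lemma~\ref{BasisGD} any form in the span of the $G_D$ is determined by its principal part, so it suffices to track principal parts, and since $p \geq 5$ forces $p^2 \equiv 1 \pmod{24}$, every form $G_{p^{2t}j}$ that appears below is a well-defined member of the grid (its index lies in the correct residue class mod $24$). For the base case I compute $G_j | \Tps{12}{2}$ directly from the definition~\eqref{Top} with $k = 1$, $s = 12$, $N = 144$, remembering that $\Tps{12}{2}$ is \emph{not} normalized in weight $3/2$, so that $p^{2k-1} = p$: the $U(p^2)$-term contributes nothing to the principal part because $p^2 \nmid j$; the twist $f \otimes \chi(p,1)$ sends $q^{-j}$ to $\leg{-(-j)}{p} q^{-j} = \leg{j}{p} q^{-j}$, which together with the Nebentypus prefactor $p^{k-1}\leg{12}{p} = \leg{12}{p}$ gives $\leg{12j}{p}\, q^{-j}$; and the $V(p^2)$-term gives $p\, q^{-p^2 j}$. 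Hence $G_j | \Tps{12}{2} = \leg{12j}{p}\, G_j + p\, G_{p^2 j}$.

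Next I would run the induction on $n$ using the recursion~\eqref{Toprec} (again with $p^{2k-1} = p$) together with the multiplicativity formula~\eqref{multiH}, exactly as in Proposition~\ref{HeckegD}. The one extra point to verify is that when $\Tps{12}{2}$ is applied to $G_{p^{2t}j}$ with $t \geq 1$ the twist term vanishes, since $p \mid p^{2t}j$; so only the $U$- and $V$-parts survive and $G_{p^{2t}j} | \Tps{12}{2} = G_{p^{2t-2}j} + p\, G_{p^{2t+2}j}$, while the $t = 0$ term still carries the factor $\leg{12j}{p}$. Feeding these into~\eqref{Toprec} and collecting terms yields, by induction on $n$,
\[
G_j | \Tps{12}{2n} = \sum_{t=0}^{n} \leg{12j}{p}^{\,n-t} p^t\, G_{p^{2t}j},
\]
which is precisely the claimed formula in the case $v = 0$.

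Finally, for $D = p^{2v}j$ with $v \geq 1$ I would invert the $v = 0$ identity above to write
\[
G_D = p^{-v}\left(G_j | \Tps{12}{2v} - \leg{12j}{p}\, G_j | \Tps{12}{2v-2}\right),
\]
apply $\Tps{12}{2n}$, and expand the products $\Tps{12}{2v}\,\Tps{12}{2n}$ and $\Tps{12}{2v-2}\,\Tps{12}{2n}$ using~\eqref{multiH}, then collect. The case $n \geq v$ comes out directly; the case $0 \leq n < v$ uses instead the relation for $\Tps{12}{2n}\,\Tps{12}{2v}$ from~\eqref{multiH} (summing $t$ from $0$ to $n$) followed by a reindexing of the sum, just as in Proposition~\ref{HeckegD}. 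I expect the main obstacle to be purely bookkeeping --- keeping the two nested sums and the powers of $\leg{12j}{p}$ straight through the telescoping cancellation --- rather than anything conceptual, since the structure of the argument is identical to that of Proposition~\ref{HeckegD} and the one input that is \emph{not} automatic (that the Hecke action on this space is still governed by principal parts, now in the presence of a nontrivial Nebentypus and the cusp-form constraint) has already been supplied by Lemma~\ref{BasisGD}.
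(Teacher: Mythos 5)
Your proposal is correct and follows the same route as the paper: the paper's proof of this proposition simply invokes Lemma~\ref{BasisGD} and says the argument of Proposition~\ref{HeckegD} carries over with an extra factor of $\leg{12}{p}$ accompanying each $\leg{j}{p}$, which is exactly what you do (base case from \eqref{Top} with $k=1$, induction via \eqref{Toprec} and \eqref{multiH}, then inverting the $v=0$ identity to handle $D=p^{2v}j$). Your write-up just makes explicit the details the paper leaves implicit, including the check that the twist term dies on $G_{p^{2t}j}$ for $t\geq 1$ and that $p^2\equiv 1\pmod{24}$ keeps the indices in the grid.
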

\begin{Proposition}\label{HeckeFd}
Let $F_d$ be one of Folsom-Ono's weight 1/2 mock modular forms on $\Gamma_0(144)$ with Nebentypus $\chi_{12}$. 
We write $d = p^{2u}\cdot i,$ where $p^2 \nmid i$ and $u \geq 0$. Then, for all $0\leq n < u$,
$$F_d | \Tps{12}{2n} = \sum_{t = 0}^{n}p^{n-t} F_{p^{2n-2u +4t}\cdot i},$$
and for $n \geq u$,
\begin{align}
\label{eqFd} F_d | \Tps{12}{2n} = \sum_{t = 0}^{n-u} \leg{-12i}{p}^{n-u-t}p^uf_{p^{2t}i} + \sum_{t = 1}^{u}p^{u-t} F_{p^{2n-2u +4t}\cdot i}.
\end{align}
\end{Proposition}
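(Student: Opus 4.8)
The plan is to run the argument of Proposition \ref{HeckegD} essentially verbatim, with Lemma \ref{BasisFd} in place of the basis property of Zagier's forms and the normalized weight $\tfrac12$ Hecke operator $\Tps{12}{2}=pU(p^2)+\leg{12}{p}\chi(p,0)+V(p^2)$ (as in the proof of Proposition \ref{Heckefd}) in place of the unnormalized one. By Lemma \ref{BasisFd} the span of the $F_d$ is stable under $\Tps{12}{2n}$ and every form in it is pinned down by its principal part, so throughout it suffices to compute with principal parts.

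First I would treat the case $u=0$, where $d=i$ with $p^2\nmid i$. Acting on the principal part $-q^{-i}$ of $F_i$: the term $pU(p^2)$ contributes nothing because $p^2\nmid i$; the twist contributes $\leg{12}{p}\leg{-i}{p}=\leg{-12i}{p}$ times $-q^{-i}$; and $V(p^2)$ sends $-q^{-i}$ to $-q^{-p^2 i}$. Hence $F_i\,|\,\Tps{12}{2}=\leg{-12i}{p}F_i+F_{p^2 i}$. The same computation applied to any $F_m$ with $p^2\mid m$ kills the twist and gives $F_m\,|\,\Tps{12}{2}=pF_{m/p^2}+F_{p^2 m}$ (here $F_{m/p^2}$ and $F_{p^2m}$ remain on the progression $-1\bmod 24$ since $p^2\equiv 1\bmod 24$, as noted in Lemma \ref{BasisFd}); note that the factor $p$ now sits on the descending term, opposite to the weight $\tfrac32$ case, which is precisely the effect of the normalization. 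Feeding these into the recursion \eqref{Toprec} and the product rule \eqref{multiH} and telescoping the sums exactly as in Proposition \ref{HeckegD}, induction on $n$ yields $F_i\,|\,\Tps{12}{2n}=\sum_{t=0}^{n}\leg{-12i}{p}^{\,n-t}F_{p^{2t}i}$, which is the claimed formula when $u=0$.

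For $u\ge 1$ and $d=p^{2u}i$, the $u=0$ identity lets me write $F_d$ as a two-term Hecke combination, $F_d=F_i\,|\,\Tps{12}{2u}-\leg{-12i}{p}\,F_i\,|\,\Tps{12}{2u-2}$; unlike the weight $\tfrac32$ situation no division by a power of $p$ is needed, since the top coefficient in the $u=0$ formula is $1$. Then I would apply $\Tps{12}{2n}$, commute it past the operators already present, expand $\Tps{12}{2u}\Tps{12}{2n}$ by \eqref{multiH} with $m=u$ and $\Tps{12}{2u-2}\Tps{12}{2n}$ with $m=u-1$ when $n\ge u$ (taking $m=n$ instead when $0\le n<u$), and reduce everything to the already-proven $u=0$ formula together with pure Hecke images $F_i\,|\,\Tps{12}{2\ell}$. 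Reindexing and collecting exactly as at the end of the proof of Proposition \ref{HeckegD} then gives \eqref{eqFd} for $n\ge u$, and the case $0\le n<u$ follows the same way.

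I expect the only real work to be bookkeeping: once the Hecke operator is renormalized the powers of $p$ migrate from the ascending to the descending indices, and one must check they land so that the final $p$-powers and exponents match \eqref{eqFd} (in particular that the $\leg{-12i}{p}$-powers are correct, the extra $12$ coming from the Nebentypus $\chi_{12}$ in \eqref{Top}). The one non-formal ingredient beyond Zagier's setting is the ``determined by the principal part'' reduction, which is Lemma \ref{BasisFd} and rests in turn on the shadow of the relevant harmonic Maass form being a cusp form together with Theorem 3.6 of \cite{BF}; granting that, the proof is structurally identical to Propositions \ref{HeckegD}, \ref{Heckefd}, and \ref{HeckeGD}.
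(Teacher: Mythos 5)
Your proposal is correct and takes essentially the same route as the paper, whose proof is just the remark that Proposition \ref{HeckeFd} follows as Proposition \ref{Heckefd} does from Proposition \ref{HeckegD}: a principal-part induction using the normalized weight $\tfrac12$ operator, with Lemma \ref{BasisFd} supplying the ``determined by the principal part'' step, exactly as you spell out. Your base-case identities and the combination $F_d=F_i|\Tps{12}{2u}-\leg{-12i}{p}F_i|\Tps{12}{2u-2}$ (with no division by $p^u$, reflecting the normalization) are right, and your bookkeeping even yields the index $p^{2u-2n+4t}$ in the $0\le n<u$ case, matching Propositions \ref{HeckegD} and \ref{Heckefd}; the exponent $2n-2u$ printed in the statement appears to be a typo.
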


\begin{proof} [Proof of Proposition \ref{HeckeGD}]
Let $G_D$ be one of Folsom-Ono's forms in \cite{FO}. Given Lemma \ref{BasisGD}, the proof of Proposition \ref{HeckeGD} follows similarly to that of Proposition \ref{HeckegD}, with the exception that a factor of $\leg{12}{p}$ appears along with each factor of $\leg{j}{p}$.
\end{proof}

\noindent The proof of Proposition \ref{HeckeFd} is similarly analogous to that of Proposition \ref{Heckefd}.

\section{Proofs of the Main Results}

\subsection{Proof of Theorem \ref{Hecke}}
We first prove two Propositions needed to prove Theorem \ref{Hecke}.

\begin{Proposition} \label {gD} Let $g_D$ be one of Zagier's forms. We write $D = p^{2v}\cdot j$, where $v\geq 0$ and $p^2 \nmid j$. For $n\geq v$, we have
\begin{align}
\label{eqgDmod} g_D | \Tp{2n + 2} - \leg{j}{p}\cdot g_D | \Tp{2n} \equiv 0 \pmod{p^{n-v+1}}.
\end{align}
\end{Proposition}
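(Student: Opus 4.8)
The plan is to invoke the explicit Hecke formula of Proposition~\ref{HeckegD} and then simply track divisibility by powers of $p$ term by term. Write $D = p^{2v}j$ with $p^2\nmid j$. Since $n\ge v$ we also have $n+1\ge v$, so Proposition~\ref{HeckegD} expresses both $g_D|\Tp{2n+2}$ and $g_D|\Tp{2n}$ as $\mathbb{Z}$-linear combinations of the basis forms $g_{p^{2t}j}$ (first sum) and $g_{p^{2n-2v+4t}j}$ (second sum), with coefficients that are explicit powers of $p$ times powers of $\leg{j}{p}$.

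Next I would form $g_D|\Tp{2n+2}-\leg{j}{p}\,g_D|\Tp{2n}$ and simplify. The point is that multiplying the formula for $g_D|\Tp{2n}$ by $\leg{j}{p}$ raises each $\leg{j}{p}$-power in its first sum by one, so the first sum of $\leg{j}{p}\,g_D|\Tp{2n}$ agrees termwise with the first sum of $g_D|\Tp{2n+2}$ except that the latter runs one index further; the one surviving term is $t=n+1-v$, which equals $p^{\,n-v+1}\,g_{p^{2(n+1-v)}j}$. The two ``second sums'' (over $t=1,\dots,v$) do not cancel, but this is harmless: every term of the second sum of $g_D|\Tp{2n+2}$ carries a factor $p^{\,n+1-v+t}$ with $t\ge 1$, hence is divisible by $p^{\,n-v+1}$, and every term of the second sum of $\leg{j}{p}\,g_D|\Tp{2n}$ carries a factor $p^{\,n-v+t}$ with $t\ge 1$, hence is likewise divisible by $p^{\,n-v+1}$.

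Finally, since all of Zagier's forms $g_D$ have integer Fourier coefficients, any summand of the shape $p^{k}g_{\,\cdot\,}$ with $k\ge n-v+1$ has every Fourier coefficient divisible by $p^{\,n-v+1}$; collecting the three pieces above gives $g_D|\Tp{2n+2}-\leg{j}{p}\,g_D|\Tp{2n}\equiv 0\pmod{p^{\,n-v+1}}$ coefficientwise, which is the claim. The case $v=0$ is the identical computation with both ``second sums'' empty, leaving only $p^{\,n+1}g_{p^{2(n+1)}j}\equiv 0\pmod{p^{\,n+1}}$.

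There is no genuinely hard step here; the only things requiring care are the reindexing of the shifted first sums (checking that the one leftover term is exactly $p^{\,n-v+1}g_{p^{2(n+1-v)}j}$) and noting explicitly that the asserted congruence is meant to hold for each Fourier coefficient, which is legitimate precisely because the $g_D$ are integral so that the powers of $p$ in front of each basis form control all of its coefficients simultaneously.
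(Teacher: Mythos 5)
Your argument is correct and is essentially the paper's own proof: both invoke Proposition~\ref{HeckegD}, cancel the two first sums term by term (leaving the single leftover $p^{\,n-v+1}g_{p^{2(n+1-v)}j}$), and observe that every term of the two second sums already carries a factor $p^{\,n-v+t}$ or $p^{\,n-v+t+1}$ with $t\ge 1$. The only cosmetic difference is that the paper packages the first-sum cancellation as the $v=0$ case applied to $g_j|\Tp{2n-2v+2}-\leg{j}{p}g_j|\Tp{2n-2v}$, which is the same computation.
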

\begin{proof}Suppose that $D = j, p^2 \nmid j$. Then, if we consider the difference of successive Hecke operators, by \eqref{eqgD} of Proposition \ref{HeckegD}:
\begin{align*}
g_j | \Tp{2n + 2} - \leg{j}{p} g_j | \Tp{2n} &= \sum_{t = 0}^{n+1} \leg{j}{p}^{n-t+1}p^{t}\cdot g_{p^{2t}\cdot j}\\ &\hspace{1cm} - \leg{j}{p}\sum_{t = 0}^{n}\leg{j}{p}^{n-t} p^{t}\cdot g_{p^{2t}\cdot j}\\
&= p^{n+1}g_{p^{2n+2}\cdot j}\\
&\equiv 0 \pmod{p^{n+1}}.
\end{align*}

We then treat the cases where $D = p^{2v}j, p^2 \nmid j$. Then, by Proposition \ref{HeckegD}, considering a difference of Hecke operators, we have, again by \eqref{eqgD},
\allowdisplaybreaks{
\begin{align*}
 g_D | \Tp{2n + 2} - \leg{j}{p}g_D | \Tp{2n} &= g_j| \Tp{2n-2v+2} + \sum_{t = 1}^{v} p^{n-v+t+1}\cdot g_{p^{2n-2v+ 4t+2}\cdot j}\\
 &\hspace{0.5cm} - \leg{j}{p}g_j| \Tp{2n-2v} - \leg{j}{p}\sum_{t = 1}^{v} p^{n-v+t}\cdot g_{p^{2n -2v+ 4t}\cdot j}\\
&= p^{n -v+1}g_{p^{2n -2v+2}\cdot j}\\ &\hspace{1cm}+ p^{n-v+1} \sum_{t=0}^{v-1}\left(p^{t+1}g_{p^{2n-2v+4t+4}\cdot j} - \leg{j}{p}p^tg_{p^{2n-2v + 4t+2}\cdot j}\right)\\
&\equiv 0 \pmod{p^{n-v+1}}.
\end{align*}}
\end{proof}

\begin{Proposition}\label{GD} Let $G_D$ be one of Folsom-Ono's weakly holomorphic modular forms of weight $3/2$ on $\Gamma_0(144)$. We write $D = p^{2m}\cdot j$, where $m\geq 0$ and $p^2 \nmid j$. For $n$ sufficiently large:
\begin{align}
\label{eqGDmod} G_D | \Tps{12}{2n+2} - \leg{12}{p}\leg{j}{p}\cdot G_D | \Tps{12}{2n} \equiv 0 \pmod{p^{n-m+1}}.
\end{align}
\end{Proposition}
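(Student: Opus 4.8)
The plan is to mirror the proof of Proposition \ref{gD} exactly, substituting the Folsom-Ono analogue \eqref{eqGD} of Proposition \ref{HeckeGD} for \eqref{eqgD} of Proposition \ref{HeckegD}. The only structural difference between the two settings is the nontrivial Nebentypus $\chi_{12}$, which by Proposition \ref{HeckeGD} manifests as an extra factor of $\leg{12}{p}$ accompanying every occurrence of $\leg{j}{p}$; this is precisely why the statement of Proposition \ref{GD} has $\leg{12}{p}\leg{j}{p}$ where Proposition \ref{gD} has $\leg{j}{p}$. So I expect no genuinely new idea to be needed — the work is bookkeeping with the shifted quadratic symbol.

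First I would handle the base case $D = j$ with $p^2 \nmid j$, so $m = 0$. Applying \eqref{eqGD} with $v = 0$, the form $G_j | \Tps{12}{2n}$ is a telescoping-friendly sum $\sum_{t=0}^{n} \leg{12j}{p}^{n-t} p^t \cdot G_{p^{2t}j}$. Writing $\leg{12j}{p} = \leg{12}{p}\leg{j}{p}$ and forming the difference $G_j | \Tps{12}{2n+2} - \leg{12}{p}\leg{j}{p}\, G_j | \Tps{12}{2n}$, all terms cancel except the top one, leaving $p^{n+1} G_{p^{2(n+1)}j}$, which is $\equiv 0 \pmod{p^{n+1}}$. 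This gives the claim for $m = 0$ and any $n \geq 0$.

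Next, for $D = p^{2m}j$ with $m \geq 1$ and $n \geq m$, I would apply the $n \geq v$ case of \eqref{eqGD} (with $v = m$) to both $G_D | \Tps{12}{2n+2}$ and $G_D | \Tps{12}{2n}$. Each expands as a "smooth part" $\sum_{t=0}^{n-m} \leg{12j}{p}^{\,\cdot} p^t G_{p^{2t}j}$ plus a "pole part" $\sum_{t=1}^{m} p^{n-m+t+(\cdot)} G_{p^{\cdot}j}$. The smooth part of $G_D | \Tps{12}{2n+2}$ minus $\leg{12}{p}\leg{j}{p}$ times the smooth part of $G_D | \Tps{12}{2n}$ telescopes down to a single term $p^{n-m+1} G_{p^{2n-2m+2}j}$, exactly as in Proposition \ref{gD} (using $\leg{12j}{p} = \leg{12}{p}\leg{j}{p}$ so the shifted symbol factors through the telescoping cleanly); the pole parts combine into $p^{n-m+1}\sum_{t=0}^{m-1}\bigl(p^{t+1} G_{p^{2n-2m+4t+4}j} - \leg{12}{p}\leg{j}{p} p^t G_{p^{2n-2m+4t+2}j}\bigr)$. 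Every term on the right is divisible by $p^{n-m+1}$, which is the desired congruence. I should state "for $n \geq m$" in place of the vaguer "$n$ sufficiently large" in the proposition statement, matching Proposition \ref{gD}.

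The only point requiring a bit of care — the "main obstacle," though it is minor — is verifying that the extra $\leg{12}{p}$ factor genuinely factors out of the whole telescoping sum rather than interacting with the index shift, i.e. that $\leg{12j}{p}^{\,a} = \leg{12}{p}^{\,a}\leg{j}{p}^{\,a}$ is applied consistently and that $\leg{12}{p}^2 = 1$ when needed to collapse consecutive terms; since $\leg{12}{p}$ is a fixed sign independent of $t$, this is immediate, but it is the one place where the Folsom-Ono computation formally differs from Zagier's. Everything else is identical to the proof of Proposition \ref{gD}, so I would present the argument compactly, pointing to that proof for the details of the telescoping and only writing out the $m = 0$ cancellation and the final grouped expression for $m \geq 1$.
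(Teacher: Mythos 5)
Your proposal is correct and matches the paper's approach: the paper simply notes that Proposition \ref{GD} follows exactly as Proposition \ref{gD}, with Proposition \ref{HeckeGD} in place of Proposition \ref{HeckegD} and a factor of $\leg{12}{p}$ accompanying each $\leg{j}{p}$, which is precisely the telescoping computation you carried out. Your added remark that the precise hypothesis should read $n \geq m$ (rather than ``$n$ sufficiently large'') is also consistent with how the result is used.
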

\begin{proof}
The proof of Proposition \ref{GD} follows exactly as that of Proposition \ref{gD}, with Proposition \ref{HeckeGD} taking the place of Proposition \ref{HeckegD} and a factor of $\leg{12}{p}$ along with each factor of $\leg{j}{p}$.
\end{proof}

\subsubsection{Proof of Theorem \ref{Hecke}}
By Zagier in \cite{ZAG}, the space of weakly holomorphic modular forms in plus space has an integral basis of the $g_D$'s considered in Proposition \ref{gD}. Thus, we can express $g$ as a sum of such $g_D$. Let
\[ g =\sum_D c_D \cdot g_D.\]
If we express $D = p^{2v_D} \cdot j_D$ where $p^2 \nmid j_D$, let $w = \max\{v_D\}$
Then, for $n \geq w$,
\allowdisplaybreaks{\begin{align*}
g | \Tp{2n+4} - g|\Tp{2n} &= \sum_D c_D \cdot \left(g_D | \Tp{2n+4} -g_D|\Tp{2n}\right)\\
&= \sum_D c_D \cdot \left(g_D | \Tp{2n+4} - \leg{j_D}{p}g_D | \Tp{2n+2}\right.\\
&\hspace{1.5 cm} +\left. \leg{j_D}{p}g_D | \Tp{2n+2} - g_D |\Tp{2n}\right)\\
&= \sum_D c_D \cdot \left(g_D | \Tp{2n+4} - \leg{j_D}{p}g_D | \Tp{2n+2} \right) \\
& \hspace{1.5 cm}+ \sum_{p \nmid j_D} c_D\leg{j_D}{p}\left(g_D | \Tp{2n+2} - \leg{j_D}{p}g_D |\Tp{2n}\right)\\
& \hspace{1.5 cm}-  \sum_{p | j_D} c_D\cdot g_D | \Tp{2n}.
\end{align*}}
Here, using Proposition \ref{gD}, the first sum is equivalent to $0\pmod{p^{n-w+2}}$, the second sum is equivalent to $0 \pmod{p^{n-w+1}}$, and the third sum is equivalent to $0 \pmod{p^{n-w}}$. Thus,
\begin{align*}
g | \Tp{2n+4} - g|\Tp{2n} &\equiv 0 \pmod{p^{n-w}}
\end{align*}

Given Proposition \ref{GD}, the proof of Theorem \ref{Hecke} (2) follows exactly as does that of Theorem \ref{Hecke} (1) after Proposition \ref{gD}, with the exception that a factor of $\leg{12}{p}$ appears along with each factor of $\leg{j}{p}$.

\subsection{Proof of Theorem \ref{Today}}
We first prove two Lemmas needed to prove Theorem \ref{Today}.
\begin{Lemma} \label{Thing1} For $n \geq v$, we have that
\begin{align}
\label{bpmod} b_{p^n}\left(p^{2v}j, d\right) -\leg{j}{p} b_{p^{n-1}}\left(p^{2v}j,d\right) &\equiv 0 \pmod{p^{n-v}}\\
\intertext{and}
\label{Bpmod} B_{p^n}\left(p^{2v}j, d\right) -\leg{12}{p}\leg{j}{p} B_{p^{n-1}}\left(p^{2v}j,d\right) &\equiv 0 \pmod{p^{n-v}}
\end{align}
 \end{Lemma}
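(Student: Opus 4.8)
The plan is to deduce both congruences directly from the coefficient-wise statements of Propositions \ref{HeckegD} and \ref{HeckeGD}, which express $g_D|\Tp{2n}$ (resp. $G_D|\Tps{12}{2n}$) as an explicit $\mathbb{Z}$-linear combination of the basis forms $g_{p^{2t}j}$ (resp. $G_{p^{2t}j}$) with coefficients that are monomials in $p$ times powers of the relevant Kronecker symbol. First I would fix $D=p^{2v}j$ with $p^2\nmid j$ and, reading off the $d$-th Fourier coefficient of \eqref{eqgD}, write
\begin{align*}
b_{p^n}\left(p^{2v}j,d\right)=\sum_{t=0}^{n-v}\leg{j}{p}^{n-v-t}p^t\, b\!\left(p^{2t}j,d\right)+\sum_{t=1}^{v}p^{n-v+t}\, b\!\left(p^{2n-2v+4t}j,d\right),
\end{align*}
and similarly for $b_{p^{n-1}}$ with $n$ replaced by $n-1$ throughout (valid since $n-1\geq v$ is not needed in the edge case, but for $n>v$ it is, and the case $n=v$ must be checked separately against the other branch of Proposition \ref{HeckegD}). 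Then I would form the difference $b_{p^n}(p^{2v}j,d)-\leg{j}{p}b_{p^{n-1}}(p^{2v}j,d)$: in the first sum the terms with $t\leq n-v-1$ telescope away exactly, leaving only the $t=n-v$ term $p^{n-v}b(p^{2(n-v)}j,d)$, while in the second sum every surviving term already carries a factor $p^{n-v+t}$ or $p^{n-1-v+t}$ with $t\geq 1$, hence is divisible by $p^{n-v}$. This yields \eqref{bpmod}.

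For \eqref{Bpmod} the argument is identical after replacing Proposition \ref{HeckegD} by Proposition \ref{HeckeGD}: the only change is that each Kronecker symbol $\leg{j}{p}$ is accompanied by a factor $\leg{12}{p}$, so the cancellation in the first sum is driven by the combined symbol $\leg{12}{p}\leg{j}{p}=\leg{12j}{p}$, and one uses the hypothesis $p\nmid 144$ so that $\leg{12}{p}\neq 0$ and $\leg{12}{p}^2=1$, making the telescoping exact in the same way. One should also note that the $B_{p^n}(D,d)$ are genuine integers, which is guaranteed by the integrality of the $G_D$ coefficients (Guerzhoy \cite{GUER}, Zwegers \cite{ZWEG}) together with Lemma \ref{BasisGD}, so that the congruences are statements about integers and not merely about $p$-adic valuations of rationals.

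The main obstacle I anticipate is purely bookkeeping rather than conceptual: carefully handling the boundary between the two regimes $0\leq n<v$ and $n\geq v$ in Propositions \ref{HeckegD} and \ref{HeckeGD}, and in particular checking that the difference $\Tp{2n}-\leg{j}{p}\Tp{2n-2}$ behaves correctly when $n=v$ exactly, where the shape of the formula changes. One clean way to sidestep this is to use instead the three-term recursion \eqref{Toprec} and the multiplicativity relation \eqref{multiH} to re-derive the needed difference formula uniformly, but since Proposition \ref{gD} already packages precisely the statement $g_D|\Tp{2n+2}-\leg{j}{p}g_D|\Tp{2n}\equiv 0\pmod{p^{n-v+1}}$ (and Proposition \ref{GD} its analogue), the cleanest route is to extract \eqref{bpmod} and \eqref{Bpmod} as the $d$-th coefficient of \eqref{eqgDmod} and \eqref{eqGDmod} respectively, reading off $n-v$ in place of $n-v+1$ because passing from $\Tp{2n+2}$ to $\Tp{2n}$ shifts the index, and this requires only that I state the indexing alignment explicitly.
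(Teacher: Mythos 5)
Your proposal is correct and takes essentially the same approach as the paper: the paper proves Lemma \ref{Thing1} simply by equating the coefficients of $q^d$ in \eqref{eqgDmod} and \eqref{eqGDmod} of Propositions \ref{gD} and \ref{GD} (with the index shifted by one, the case $n=v$ being trivial modulo $p^0$), which is exactly the ``cleanest route'' you identify at the end. Your explicit telescoping computation from Propositions \ref{HeckegD} and \ref{HeckeGD} merely re-derives Proposition \ref{gD} coefficient-wise, so it is the same argument in expanded form.
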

 
\begin{proof}
Recalling that we let the coefficients of $g_D, G_D$ after the action of the Hecke operator be denoted as 
\begin{align*}  g_D | \Tp{2n} &= q^{-D} + \sum_d b_{p^n}(D,d)q^d\\
G_D | \Tps{12}{2n} &= q^{-D} + \sum_d B_{p^n}(D,d)q^d,
\end{align*}
we equate the coefficients of the $q^d$-th powers on either side of the equations \eqref{eqgDmod} and \eqref{eqGDmod} in Propositions \ref{gD} and \ref{GD}, respectively.
\end{proof}

\begin{Lemma} \label{Thing2} For $n \geq u$,
\begin{align}
\label{ap} a_{p^n}\left(D,p^{2u}i\right) &= \sum_{t = 0}^{n-u} \leg{-i}{p}^{n-u-t}p^ua\left(D,p^{2t}i\right)+ \sum_{t = 1}^{u}p^{u-t} a\left(D,p^{2n-2u +4t}\cdot i\right),\\
\intertext{and}
\label{Ap} A_{p^n}\left(D,p^{2u}i\right) &= \sum_{t = 0}^{n-u} \leg{12}{p}^{n-u-t}\leg{-i}{p}^{n-u-t}p^uA\left(D,p^{2t}i\right)+ \sum_{t = 1}^{u}p^{u-t} A\left(D,p^{2n-2u +4t}\cdot i\right).
\end{align}
\end{Lemma}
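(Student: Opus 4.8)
The plan is to deduce both identities directly from the closed-form descriptions of the Hecke action already obtained for Zagier's weight $1/2$ forms $f_d$ and Folsom-Ono's weight $1/2$ forms $F_d$, by extracting a single Fourier coefficient. Recall the notation fixed before Theorem~\ref{AB}: $a_{p^n}(D,d)$ is by definition the coefficient of $q^D$ in $f_d\,|\,\Tp{2n}$, and similarly $A_{p^n}(D,d)$ is the coefficient of $q^D$ in $F_d\,|\,\Tps{12}{2n}$. So it suffices to expand each of these forms in Zagier's (resp. Folsom-Ono's) basis and read off the $q^D$-term.

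For \eqref{ap}, I would write $d=p^{2u}i$ with $p^2\nmid i$ and invoke the $n\ge u$ case of Proposition~\ref{Heckefd}, namely equation~\eqref{eqfd}, which presents $f_d\,|\,\Tp{2n}$ as the explicit $\mathbb{Z}$-linear combination $\sum_{t=0}^{n-u}\leg{-i}{p}^{n-u-t}p^u f_{p^{2t}i}+\sum_{t=1}^{u}p^{u-t}f_{p^{2n-2u+4t}i}$ of basis forms. Since $f_e=q^{-e}+\sum_{D>0}a(D,e)q^D$, the coefficient of $q^D$ in $f_e$ is $a(D,e)$, so taking the $q^D$-coefficient of both sides of \eqref{eqfd} gives \eqref{ap} term by term.

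For \eqref{Ap}, I would run the identical argument using the $n\ge u$ case of Proposition~\ref{HeckeFd}, i.e.\ equation~\eqref{eqFd}, together with the expansion $F_e=-q^{-e}+\sum_{D>0}A(D,e)q^D$, so that the coefficient of $q^D$ in $F_e$ is $A(D,e)$. The only cosmetic adjustment is that \eqref{eqFd} carries the character value $\leg{-12i}{p}^{n-u-t}$, which by multiplicativity of the Kronecker symbol equals $\leg{12}{p}^{n-u-t}\leg{-i}{p}^{n-u-t}$; rewriting it in this factored form matches the statement \eqref{Ap} exactly.

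I do not expect a genuine obstacle here: the content of the lemma is already packaged inside Propositions~\ref{Heckefd} and~\ref{HeckeFd}, whose proofs in turn rest on the fact that the Hecke action is determined by principal parts (Lemma~\ref{BasisFd} in the Folsom-Ono case). The only things to watch will be organizational — making sure one is on the $n\ge u$ branch of each proposition so the summation ranges and the powers of $p$ come out as written, and keeping each $\leg{12}{p}$ factor attached to the correct power of the other Kronecker symbol in the mock modular case.
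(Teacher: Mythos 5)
Your proposal is correct and is essentially the paper's own argument: the authors likewise obtain \eqref{ap} and \eqref{Ap} by equating the coefficients of $q^D$ on both sides of \eqref{eqfd} and \eqref{eqFd} in Propositions \ref{Heckefd} and \ref{HeckeFd}, using the defining expansions of $a_{p^n}(D,d)$ and $A_{p^n}(D,d)$. Your added remarks (staying on the $n\ge u$ branch and factoring $\leg{-12i}{p}$ as $\leg{12}{p}\leg{-i}{p}$) are consistent with what the paper leaves implicit.
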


\begin{proof}
Recalling that we let the coefficients of $f_d$ after the action of the Hecke operator be denoted as 
\begin{align*}  f_d | \Tp{2n} &= \sum_D a_{p^n}(D,d)q^D\\
 F_d | \Tps{12}{2n} &= \sum_D A_{p^n}(D,d)q^D,
\end{align*}
we equate the coefficients of the $q^D$-th powers on either side of \eqref{eqfd} and \eqref{eqFd} in Propositions \ref{Heckefd} and  \ref{HeckeFd}, respectively.
\end{proof}

\begin{proof}[Proof of Theorem \ref{Today}]
If we let $u = 0$ in \eqref{ap}, we obtain
$$a_{p^n}(D,i) = \sum_{t = 0}^{n} \leg{-i}{p}^{n-t}b\left(D,p^{2t}i\right).$$
However, we also know, by \eqref{aandb} of Theorem \ref{AB} that
$$a_m(D,d) = -b_m(D,d)$$
for all $m,D,d$. Then, substituting into \eqref{bpmod} of Lemma \ref{Thing1},
\begin{align*}
b_{p^n}\left(p^{2v}j, i\right) - \leg{j}{p}b_{p^{n-1}}\left(p^{2v}j,i\right) &\equiv 0 \pmod{p^{n-v}}\\
-a_{p^n}\left(p^{2v}j, i\right) +\leg{j}{p} a_{p^{n-1}}\left(p^{2v}j,i\right)&\equiv 0 \pmod{p^{n-v}} \\
-\sum_{t = 0}^{n} \leg{-i}{p}^{n-t}a\left(p^{2v}j,p^{2t}i\right) + \leg{j}{p}\sum_{t = 0}^{n-1} \leg{-i}{p}^{n-t-1}a\left(p^{2v}j,p^{2t}i\right)&\equiv 0 \pmod{p^{n-v}}\\
\end{align*}
If $\leg{j}{p} = \leg{-i}{p}$, then the terms for $0 \leq t \leq n-1$ cancel between sums and
\begin{align*}
 -a\left(p^{2v}j,p^{2n}i\right) \equiv 0 \pmod{p^{n-v}}.
\end{align*}
So, applying Theorem \ref{AB} again, 
$$  b\left(p^{2v}j,p^{2n}i\right) \equiv 0 \pmod{p^{n-v}}.$$
In particular, when we then substitute $n = v+s$,
$$  b\left(p^{2v}j,p^{2v+ 2s}i\right) \equiv 0 \pmod{p^{s}}.$$
\end{proof}

As in previous sections, this proof follows exactly as that of Theorem \ref{Today} (1), with the exception that a factor of $\leg{12}{p}$ appears with every factor of $\leg{-i}{p}$.

\bibliographystyle{plain}
\bibliography{wackobill}{}

\end{document}